\theoremstyle{plain}
\newtheorem{theorem}{Theorem}[section]
\newtheorem{lemma}[theorem]{Lemma}
\newtheorem{prop}[theorem]{Proposition}
\newcommand{\td}{\text{d}}
\theoremstyle{definition}
\newtheorem{remark}[theorem]{Remark}
\numberwithin{equation}{section}
\gdef\x{\xi}
\newcommand{\R}{\mathbb R}
\newcommand{\Z}{\mathbb Z}
\newcommand{\C}{\mathbb C}
\newcommand{\im}{\mathrm{im}}
\begin{document}




\title[Abelian instantons over the Chen-Teo AF geometry]{Abelian instantons over the Chen-Teo AF geometry}

\author{Thomas John Baird}
\author{Hari K. Kunduri}


\thanks{T.J. Baird and H.K  Kunduri acknowledge the support of NSERC Grants  RGPIN-2016-05382 and RGPIN-2018-04887 respectively.}

\begin{abstract}
We classify finite energy harmonic 2-forms on the asymptotically flat gravitational instanton constructed by Chen and Teo. We prove that every $U(1)$-bundle admits a unique anti-self-dual Yang-Mills instanton (up to gauge equivalence) which we describe explicitly in coordinates.  As an application, we compute the classical partition function for Maxwell theory with theta term.
\end{abstract}
\maketitle
\section{Introduction}\label{intro}

The classic black hole uniqueness theorem in general relativity states that the set of asymptotically flat (or AF) stationary and axisymmetric black hole solutions of the vacuum Einstein equations  is exhausted by the two parameter family of Kerr solutions (for precise hypotheses, see \cite[Thm 3.2]{Chrusciel:2012jk}).  

In 1980, Lapedes \cite{L} conjectured that an analogue of this Lorentzian result holds in Riemannian geometry.  Consider the homogeneous space $M_\flat := \R^4/ \psi$, the quotient of flat $\R^4 = \R^3 \times \R$ under an isometry $\psi$ which acts by a rotation of $\R^3$ and a non-trivial translation of $\R$. An \textbf{AF gravitational instanton} is a complete, Ricci-flat Riemannian manifold $(M,g)$ that approaches $M_\flat$ asymptotically at infinity. This terminology is due to an analogy in quantum gravity with Yang-Mills instantons in quantum gauge theory \cite{Gibbons:1976ue}. 

Notice that $M_\flat$ admits a $T^2 = U(1) \times U(1)$ isometry generated by coaxial rotations $\frac{\partial}{\partial \phi}$ of $\R^3$ and translations $\frac{\partial}{\partial \tau}$ of $\R$. Lapedes conjectured that the only non-trivial  AF gravitational instantons with a 2-torus isometry are the `Euclidean Kerr' manifolds (see \eqref{EKerr}).

A counterexample was constructed thirty one years later by Chen and Teo \cite{CT1,CT2} (it necessarily does not admit a Lorentzian section). The Chen-Teo AF instanton $(M,g)$ has underlying manifold diffeomorphic to the complex projective plane with circle removed $M \cong \C P^2 \setminus S^1$. Under the $T^2$ action $(M,g)$ has three nuts, two finite bolts, and two infinite bolts within the classification framework of \cite{Gibbons:1979xm}.

The aim of the present work is to classify Abelian Yang-Mills instantons on $(M,g)$. Since $M$ is simply connected, such solutions are in one-to-one correspondence with (anti-)self-dual $L^2$ harmonic 2-forms that satisfy the `Dirac quantization condition', i.e., its periods over all 2-cycles lie in $2\pi \Z$. We first classify finite energy harmonic 2-forms on $(M,g)$, which we show are spanned by a self-dual form and two anti-self-dual forms which we construct explicitly. Essential to our construction is exploiting the $T^2$ isometry. In particular, we derive and solve a pair of scalar PDEs whose solutions generate self-dual and anti-self-dual forms respectively. 

By integrating these forms over 2-cycles, we identify those harmonic forms satisfying Dirac quantization, and prove that every $U(1)$-bundle $P$ over $M$ admits a unique Abelian instanton up to gauge equivalence, which we identify explicitly in coordinates. As an application, we calculate the semi-classical partition function for Maxwell Theory with theta term \cite{T, W}.

Harmonic 2-forms on gravitational instantons were intensively studied in the late 90s  in relation to Sen's conjecture \cite{Sen}. Gibbons \cite{Gibbons} constructed a class of such harmonic 2-forms on the Taub-NUT geometry, by observing that Killing vector fields naturally give rise to harmonic forms and metrics which admit such Killing fields with bounded norm give rise to finite-energy harmonic forms.  Hitchin \cite{H} subsequently showed this spanned the entire space of $L^2$ harmonic forms as part of his proof of Sen's S-duality conjecture in that setting. This approach was also used by Etesi and Hausel to construct Abelian Yang-Mills instantons for the Euclidean Schwarzschild metric \cite{EH}.  We also refer the reader to related work of Franchetti on  $L^2$ harmonic forms over asymptotically locally flat gravitational instantons of type $A_{K-1}$ and $D_K$ \cite{F} and on the Taub-bolt geometry \cite{F2}. 

Our paper in outline:

\begin{itemize}
\item[\S \ref{Lchf}] We prove there are exactly three independent $L^2$ harmonic 2-forms on $(M,g)$. 

\item[\S \ref{CHF}] We reduce the problem to a scalar equation in three dimensions and use the Gibbons construction to generate two solutions, $\omega_\pm$, on $(M,g)$.

\item[\S \ref{CTmet}] We present the Chen-Teo metric and identify the third solution $\omega_2$. This was obtained indirectly from a solution to a related scalar PDE found by Bossard, Katmadas, and Turton \cite{GKT}. 

\item[\S \ref{Period integrals}] We use fixed point localization  to compute the periods of these solutions over 2-cycles and identify the instantons.  

\item[\S \ref{INMPF}] We use Poincar\'e duality to compute intersection pairings and evaluate the classical partition function for Maxwell theory with theta term.

\end{itemize}

We include as well five appendices. 

\begin{itemize}

\item[\S \ref{ALFintro}] An introduction to AF and ALF metrics.

\item[\S \ref{3Dr}] Further details on reducing to 3 dimensions supplementing \S \ref{CHF}.

\item[\S \ref{Smoothness of the functions (x,y)}] Verification that our solutions are globally smooth.

\item[\S \ref{A homeomorphism from M}] A diffeomorphism between $\C P^2 \setminus S^1$ and the Chen-Teo manifold. 

\item[\S \ref{LThf}] An alternative calculation of the periods and intersection numbers using a reduction to the two-dimensional orbit space $M \setminus T^2$.

\end{itemize}

\section{$L^2$ cohomology and harmonic forms}\label{Lchf}

Given a complete Riemannian manifold $(M,g)$, the degree $k$ \emph{harmonic cohomology}, denoted   $\mathcal{H}^k(M,g)$, is the vector space of $L^2$-integrable harmonic $k$-forms. That is 
$$  \mathcal{H}^k(M,g) := \{ \omega \in \Omega^k(M;\R) |  \td\omega = \td \star \omega =0,  ~\int_M \omega \wedge \star \omega < \infty\}.   $$  

\begin{lemma}\label{2only}
If $(M,g)$ is a complete Riemannian 4-manifold of infinite volume with non-negative Ricci curvature, then $\mathcal{H}^k(M,g) =0$ unless $k=2$.
\end{lemma}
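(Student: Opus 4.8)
The plan is to combine Bochner-type vanishing with a separate argument ruling out $L^2$ harmonic functions on an infinite-volume complete manifold.

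First I would dispose of $k=0$: a harmonic function in $L^2$ on a complete manifold is necessarily constant (for instance by the Yau-type Liouville theorem, or more elementarily since $\Delta f = 0$ and $f \in L^2$ forces $f$ constant on a complete manifold, using a cutoff argument: integrate $f\Delta f$ against a cutoff $\chi_R$ and let $R\to\infty$, so $\int |\nabla f|^2 = 0$), and a constant function is $L^2$ only if the volume is finite. Since $\operatorname{Vol}(M,g)=\infty$ by hypothesis, $\mathcal H^0(M,g)=0$. By Hodge star duality on the oriented $4$-manifold, $\star$ gives an isomorphism $\mathcal H^k(M,g)\cong\mathcal H^{4-k}(M,g)$, so $\mathcal H^4(M,g)=0$ as well.

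Next I would handle $k=1$ (and hence $k=3$ by the same duality). The Bochner–Weitzenböck formula for $1$-forms reads $\Delta\omega=\nabla^*\nabla\omega+\operatorname{Ric}(\omega)$. For $\omega\in\mathcal H^1(M,g)$ one pairs this with $\omega$ against a compactly supported cutoff $\chi_R^2$, integrates by parts, and uses $\operatorname{Ric}\geq 0$ to get $\int \chi_R^2|\nabla\omega|^2 \leq C\int |\nabla\chi_R|^2|\omega|^2$. Choosing the standard logarithmic or linear cutoffs supported on an annulus and using $\omega\in L^2$, the right side tends to $0$ as $R\to\infty$ (this is the standard Gaffney/Yau cutoff argument, valid because completeness gives exhaustion by such cutoffs with $|\nabla\chi_R|$ controlled). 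Hence $\nabla\omega=0$, so $|\omega|$ is constant; being $L^2$ on an infinite-volume manifold forces $\omega=0$. The main subtlety here, which I would state carefully, is the validity of the integration by parts on a noncompact complete manifold — this is exactly where completeness is used, via the existence of a suitable sequence of cutoff functions (Gaffney's theorem), and where one must be slightly careful that all the boundary terms produced by the cutoff genuinely vanish in the limit given only the $L^2$ hypothesis on $\omega$ and not on $\nabla\omega$ a priori; the self-improving nature of the Caccioppoli-type inequality above resolves this.

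This leaves only $k=2$, which is not claimed to vanish, so the proof is complete. I expect the only real obstacle to be the careful justification of the cutoff/integration-by-parts step in the noncompact setting; everything else is formal once $\operatorname{Ric}\geq 0$ is invoked. I would remark that the infinite-volume hypothesis is essential only for $k\in\{0,1,3,4\}$ to upgrade "parallel" to "zero", and that for $k=2$ even the Bochner argument fails to give vanishing because the relevant Weitzenböck curvature term involves the full curvature operator rather than just $\operatorname{Ric}$, which is why harmonic $2$-forms can and do exist on $(M,g)$.
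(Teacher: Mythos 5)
Your proposal is correct and follows essentially the same route as the paper: dispose of $k=0$ directly, use Ricci non-negativity plus completeness to force $L^2$ harmonic $1$-forms to be parallel (hence zero by infinite volume), and finish $k=3,4$ by Hodge duality. The only difference is that the paper simply cites Yau's theorem for the parallelism of $L^2$ harmonic $1$-forms, whereas you reprove it via the standard Bochner--Caccioppoli cutoff argument; both are fine.
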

\begin{proof}
Harmonic $0$-forms must be constant, so since $(M,g)$ has infinite volume $ \mathcal{H}^0(M,g) =0$. Because $\mathrm{Ric}(g) \geq 0$ and $g$ is complete, $L^2$ harmonic forms must be parallel (Yau \cite{Y} Thm 6), hence zero since $(M,g)$ has infinite volume, so $\mathcal{H}^1(M,g) =0$. Hodge duality implies that $\mathcal{H}^4(M,g) = \mathcal{H}^0(M,g)=0$ and $\mathcal{H}^3(M,g) = \mathcal{H}^1(M,g)=0$.
\end{proof}

\begin{prop}
For the Chen-Teo instanton $(M,g)$, we have $ \mathcal{H}^2(M,g) \cong  \R^3$.
\end{prop}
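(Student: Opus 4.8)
The plan is to compute $\mathcal{H}^2(M,g)$ via the $L^2$-Hodge theory of complete manifolds with suitable decay at infinity. By Lemma~\ref{2only} we already know $\mathcal{H}^k$ vanishes for $k \neq 2$, since the Chen--Teo instanton is complete, Ricci-flat (hence $\mathrm{Ric} \geq 0$), and of infinite volume (being AF). So everything reduces to pinning down the second harmonic cohomology. The natural tool is the identification of $L^2$-cohomology with a subquotient of ordinary cohomology: for a manifold with a geometrically controlled end — here an AF end modelled on $\R^4/\psi$ — there is an isomorphism between $\mathcal{H}^k(M,g)$ and the image of compactly-supported cohomology in ordinary cohomology, $\im\bigl(H^k_c(M;\R) \to H^k(M;\R)\bigr)$, sometimes phrased via a long exact sequence relating $H^*_c(M)$, $H^*(M)$, and the cohomology of the end (or its cross-section at infinity). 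I would invoke the relevant theorem (this is in the spirit of results of Hausel--Hunsicker--Mazzeo, or the older APS-type arguments; a clean statement for ALF/AF geometry is available and should be cited), so that the problem becomes the purely topological one of computing that image.

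Next I would carry out the topology. Since $M \cong \C P^2 \setminus S^1$, I would compute $H^*(M;\R)$ and $H^*_c(M;\R)$ directly. One convenient route: $\C P^2 \setminus S^1$ deformation retracts onto a $2$-complex, and one can use the long exact sequence of the pair $(\C P^2, N)$ where $N$ is a tubular neighbourhood of the $S^1$, together with excision, or alternatively use Mayer--Vietoris for $\C P^2 = (\C P^2 \setminus S^1) \cup N$. This yields $H^2(M;\R) \cong \R^3$ (consistent with the stated answer and with the count of bolts). For the compactly supported side, Poincaré--Lefschetz duality on the oriented $4$-manifold $M$ gives $H^k_c(M) \cong H_{4-k}(M)$, so $H^2_c(M) \cong H_2(M)$. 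Then I would identify the map $H^2_c(M) \to H^2(M)$ with the intersection form, i.e. with $H_2(M) \to H^2(M) \cong \mathrm{Hom}(H_2(M),\R)$, $a \mapsto (b \mapsto a \cdot b)$. Showing this pairing is nondegenerate (rank $3$) — equivalently that the cycles coming from the three bolts/the generators of $H_2$ are detected by intersection — is the crux; the later sections of the paper compute exactly these intersection numbers, and one can cite §\ref{INMPF} or argue more cheaply that a $4$-manifold whose end is a quotient of flat $\R^4$ has nondegenerate $L^2$-intersection form on $\mathcal{H}^2$ because the $L^2$ pairing $\int_M \omega \wedge \omega'$ is itself nondegenerate on harmonic forms (positive/negative definite on self-dual/anti-self-dual parts). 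In fact this last observation is the slickest: the $L^2$ inner product restricted to $\mathcal{H}^2$ splits as $\mathcal{H}^+ \oplus \mathcal{H}^-$ with the wedge pairing $\pm$-definite, so $\dim \mathcal{H}^2 = b_2^+(\text{``}L^2\text{''}) + b_2^-$, and matching this with the topological rank $3$ closes the argument.

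The main obstacle I anticipate is justifying the $L^2$-Hodge isomorphism with the correct decay hypotheses at the AF end: one must check that the Chen--Teo end decays fast enough that no $L^2$ harmonic forms are "lost to infinity" and none are spuriously created, i.e. that the relevant $L^2$ cohomology genuinely equals $\im(H^2_c \to H^2)$ rather than all of $H^2$ or something smaller. For AF (as opposed to ALE or ALF) ends the cross-section at infinity is a flat $3$-manifold $\R^3/\psi \sim S^1 \times \R^2$-quotient type object, and one needs the Fredholm/weighted-analysis input (e.g. from the Chen--Teo asymptotics, or a Bianchi--Egnell / Lockhart--McOwen style argument) to rule out the indicial roots that would obstruct the isomorphism. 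I would state this as a cited analytic lemma rather than reprove it, and then the topological computation above finishes the proof that $\mathcal{H}^2(M,g) \cong \R^3$.
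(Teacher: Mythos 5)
Your reduction to degree $2$ via Lemma \ref{2only} matches the paper, but the core of your argument rests on two claims that are both false for this geometry. First, the topological input is wrong: for $M \cong \C P^2 \setminus S^1$ a Mayer--Vietoris computation (with a tubular neighbourhood of the $S^1$, whose boundary is $S^1 \times S^2$) gives $H^2(M;\R) \cong H^2_c(M;\R) \cong \R^2$, not $\R^3$; the two \emph{finite} bolts generate $H_2(M;\Z) \cong \Z^2$. Second, and more seriously, the proposed identification $\mathcal{H}^2(M,g) \cong \im\bigl(H^2_c(M) \to H^2(M)\bigr)$ fails for an AF end of this type: as recorded in \S\ref{Lchf}, the composite $j \circ i$ in \eqref{commdiag} factors through $H^2(\overline{M}) = H^2(\C P^2) \cong \R$ and has rank \emph{one}, while $\mathcal{H}^2(M,g)$ is three-dimensional (the maps $i$ and $j$ each have rank two, but $L^2$ harmonic forms are neither injected from $H^2_c$ nor surjected onto $H^2$ in the way your argument requires). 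The Hausel--Hunsicker--Mazzeo-type theorems you allude to are sensitive to the structure of the end, and for a flat end with cross-section of the type $S^2 \times S^1$ they do not reduce $\mathcal{H}^2$ to that image. So your method, carried out consistently, would output either $1$ (from the image) or $2$ (from $H^2$), not $3$; the fact that you quote $3$ comes from the erroneous Betti number.

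The paper's actual route avoids identifying $\mathcal{H}^2$ with any single topological group: since all other $\mathcal{H}^k$ vanish, $\dim \mathcal{H}^2$ equals the $L^2$-Euler characteristic $\chi_{(2)}(M,g)$, which is a quasi-isometry invariant; replacing $g$ by a metric that is exactly flat outside a compact set, Carron's theorem for manifolds with flat ends (using that the end has torsion-free fundamental group $\Z$) identifies $\chi_{(2)}$ with the topological Euler characteristic $\chi(\C P^2 \setminus S^1) = 3$. If you want to salvage your approach, you would need the correct $L^2$-Hodge theorem for this class of ends, which is precisely what Carron provides at the level of the Euler characteristic; the degree-by-degree identification you propose is not available here.
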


\begin{proof}
From Lemma \ref{2only} it follows that the dimension of $\mathcal{H}^2(M,g)$ is equal to the $L^2$-Euler characteristic
$$ \chi_{(2)}(M,g) := \sum_{i=0}^4 (-1)^i\dim \mathcal{H}^i(M,g) . $$

This can be calculated using a result of Carron \cite{C}.  Since $(M,g)$ is asymptotically flat, it is quasi-isomorphic to a metric $g'$ which is flat outside of a compact set. Since Hodge cohomology on complete manifolds is invariant under quasi isomorphism (see for example \cite{HHM} \S 2.1), it follows that 
$$ \chi_{(2)}(M,g) = \chi_{(2)}(M,g'). $$
Finally, since the end of $(M,g')$, has torsion free fundamental group $ \pi_1(\R \times S^2 \times S^1) \cong \mathbb{Z}$ it follows from Carron (\cite{C} Thm. 1.7) that $\chi_{(2)}(M,g')$ agrees with the topological Euler characteristic of $M \cong \C P^2 \setminus S^1$, which is easily shown to equal 3. Alternatively, it equals the integral of the Euler form, which was shown to equal 3 by Chen and Teo \cite{CT1}.
\end{proof}

The Hodge star operator $\star$ determines an order two automorphism of $\mathcal{H}^2(M,g)$, so we have a decomposition into self-dual and anti-self-dual forms
$$ \mathcal{H}^2(M,g)  =  \mathcal{H}^2_+(M,g) \oplus \mathcal{H}^2_-(M,g)$$
where $\mathcal{H}^2_{\pm}(M,g) := \{ \omega \in \mathcal{H}^2(M,g)|  \star \omega = \pm \omega\}$.

Because the harmonic cohomology $\mathcal{H}^2(M,g)$ is naturally isomorphic to the $L^2$-cohomology, there are natural linear maps
\begin{equation}\label{commdiag}
  H^2_c(M) \stackrel{i}{\rightarrow} \mathcal{H}^2(M,g) \stackrel{j}{\rightarrow} H^2(M).
\end{equation}
where $ H^2_c(M)$ and  $H^2(M)$ are compactly supported and ordinary de Rham cohomology respectively and the composition $j \circ i$ is the natural map (see \cite{SS} Lemma 1.3). Since $M \cong \C P^2 \setminus S^1$  we have 
\begin{equation}
H^2(M) \cong H^2_c(M) \cong \R^2 
\end{equation} 
and by Poincar\'e duality a non-degenerate pairing \begin{eqnarray}\label{pairing333}
H^2(M) \otimes H^2_c(M) \rightarrow \R  &&( \alpha, \beta ):=  \int \alpha \wedge \beta.
\end{eqnarray}

The compactification $\overline{M} \cong \C P^2$ satisfies 
\begin{equation}
H^2(\overline{M}) = H^2_c(\overline{M}) \cong \R,
\end{equation} 
and we have a commuting diagram
\begin{equation}\label{commdiag2}
\xymatrix{   &  H^*_c(\overline{M})  = H^*(\overline{M}) \ar[dr] & \\   H^*_c(M) \ar[ur] \ar[rr]^{j\circ i} & & H^*(M) }
\end{equation}
where the diagonal arrows are the natural maps.  From (\ref{commdiag2}) it can be verified that $\im(j \circ i)$ coincides with the image of $H^*(\overline{M}) \rightarrow H^*(M)$ so in particular, $j \circ i$ has rank one. In \S \ref{Period integrals} we describe $i$ and $j$ explicitly showing they both have rank two.

The symmetric pairing  
\begin{eqnarray}\label{symmpair}
H^2_c(M) \otimes H^2_c(M) \rightarrow \R 
\end{eqnarray} 
restricting (\ref{pairing333}) is semi-definite of rank one. We will see in Proposition \ref{finen} that under our orientation convention, 
\begin{eqnarray}
 \mathcal{H}^2_+(M,g) \cong \R & & \mathcal{H}^2_-(M,g) \cong \R^2 
\end{eqnarray} 
which implies (\ref{symmpair}) is negative definite. This implies for instance that bolts have non-positive self intersection number.

\section{Constructing harmonic forms on a manifold with a local isometry}\label{harmonic}\label{CHF}
\subsection{Harmonic forms associated to isometries} 
On a complete, Ricci-flat Riemannian manifold $(M,g)$ admitting an isometry generated by a Killing vector field, it is simple to obtain a harmonic two-form using the following elementary construction (see, e.g.~\cite{Gibbons}.)  Using the identity 
\begin{equation}
\td \star \td K = 2 \text{Ric}(K, \sim) \;, 
\end{equation} where we denote by $K$ both the vector field $K$ and its metric dual $g(K, ~)$. We see immediately for a Ricci flat space, $\td \star \td K = 0$, which implies that $\td K$ is co-closed. Since it is trivially closed and $(M,g)$ is complete,  $\td K$ is a harmonic 2-form.  Consequently  
\begin{equation}\label{GibbonsForms}
\omega^\pm   \equiv \td K  \pm \star \td K
\end{equation} are respectively self-dual and anti-self-dual harmonic 2-forms. 

The $L^2$ energy of these forms are given by
\begin{equation}
\frac{1}{2}|| \td K||^2_{L^2} = \frac{1}{2} || \star \td K||^2_{L^2} \equiv \frac{1}{8\pi^2}\int_M \td K\wedge \star \td K.
\end{equation}
In the Chen-Teo metric~\eqref{g}, choose $K = \frac{\partial}{\partial \tau}$. It can be checked explicitly (see Proposition \ref{finen} ) that as $r \to \infty$, 
\begin{equation}
\omega^\pm \wedge \star \, \omega^\pm = 2 \td K \wedge \star\, \td K \pm 2 \td K \wedge \td K =  O(r^{-2}) \td r \wedge \td \theta \wedge  \td \psi \wedge  \td \phi
\end{equation}  
$\omega_+$ and $\omega_-$ are respectively self-dual and anti-self-dual $L^2$ harmonic forms and therefore provide two of three basis vectors for $\mathcal{H}^2(M,g)$.
A similar construction involving the Killing vector $ \partial_\phi$  of \eqref{g} gives rise to harmonic forms whose $L^2$ energy density grows like $O(r^2)\td r \wedge \td \theta \wedge  \td \psi \wedge  \td \phi$,  which clearly diverges.

\subsection{The reduction to three dimensions} \label{Reduction}
The vacuum Einstein equation, $\text{Ric}(g) =0$, for a Riemannian 4-manifold $(M,g)$ admitting a Killing vector field $K$ can locally be reduced to a system of equations in 3 dimensions as follows. In a local neighbourhood where $K \neq 0$, it is possible to introduce local coordinates $\tau, x^i$, $i=1,2,3$ so that $K = \partial_\tau$ and $L_Kx^i  =0$ and the metric $g$ can then be written in the form:
\begin{equation}
g = \lambda(\td\tau + \Omega)^2 + \frac{h}{\lambda}
\end{equation} where $\Omega$ is 1-form and $h$ is a $K$-invariant Riemannian metric in the transverse coordinates $x_1,x_2,x_3$, and $\lambda :=g(K,K) =  |K|_g^2$. We will define an orientation on $(M,g)$ by choosing $\td \text{Vol}(g) = \lambda^{-1} \td \tau \wedge \td \text{Vol} (h)$ to have positive orientation. Introducing the globally defined \emph{twist one-form} 
\begin{equation}
 T := \star_g (K \wedge \td K)
 \end{equation}  which is also a form on the orbit space (indeed note that $i_K T =0$) , the condition $\text{Ric}(g) =0$ is equivalent to the system
\begin{equation} \label{Ricciflat}
\begin{aligned}
R_{ab}(h) &= \frac{1}{2\lambda^2} \partial_a \lambda \partial_b \lambda - \frac{1}{2\lambda^2} T_a T_b \\
\Delta_h \lambda &= \frac{|\td \lambda|^2}{\lambda} + \frac{|T|^2}{\lambda} \Leftrightarrow \td \star_h \td \lambda = \frac{\td \lambda \wedge \star_h \td \lambda}{\lambda} + \frac{T \wedge \star_h T}{\lambda} \\
\td T & = 0 \;, \qquad \td \star_h (\lambda^{-2} T) =0 \Leftrightarrow \nabla^a \left(\frac{T_a}{\lambda^2}\right) =0.
\end{aligned}
\end{equation}  Here $\star_h$, $\Delta_h$, and $\nabla$  refer to the Hodge dual, Laplacian, and Riemannian connection associated to $h$.  We also use $|\alpha|^2 = h^{ij} \alpha_i \alpha_j/p!$ to refer to the inner product on a $p-$form $\alpha$ induced by $h$. 

Since $\td T =0$, we can write $T = \td\zeta$ for some locally defined potential $\zeta = \zeta(x^i)$. In the Chen-Teo case, $M$ is simply connected so $\zeta$ is globally defined. Finally, $T$ is related to $\Omega$ locally by
\begin{equation}
T = -\lambda^2 \star_h \td\Omega , 
\end{equation} which  is consistent with the $\td\star_h (\lambda^{-2} T) =0$ equation.   Note that $\zeta$ must satisfy  
\begin{equation}\label{Laptau}
\Delta_h \zeta = \frac{2}{\lambda} \td\zeta \cdot \td\lambda. 
\end{equation}  where $\cdot$ denotes the inner product on forms induced by $h$. 

It is well known that the system $(h, \lambda, \zeta)$  is equivalent to a 3d theory of gravity coupled to a `non-linear sigma model'. The latter in mathematician's language is a harmonic map with a target space that is a homogeneous space. This means that $(\lambda, \zeta)$ parameterize the 2d manifold $SL(2, R)/ SO(1,1)$ with a Lorentzian signature target space equipped with the Anti-de Sitter metric, which has constant negative curvature.  In the better known setting in which $g$ has Lorentzian signature that is relevant to axisymmetric vacuum solutions of general relativity, the target is $SL(2,R)/ SO(2) = \mathbb{H}^2$ equipped with its canonical constant negative curvature metric. 

We turn now to the (anti-)self-duality equations. By a result of Hitchin \cite{H},  if $|K|$ grows at most linearly in $|x|$ at infinity, then $\omega \in \mathcal{H}^2(M,g)$ must be invariant under $K$. We have
\begin{equation}\label{Liediv0}
L_K \omega = L_K \star \omega=0 
\end{equation} which implies, $i_K \omega = i_K \star \omega = E$ for some closed form $E$. In particular, if $M$ is simply connected, $E  = \td \alpha$ for a globally defined function $\alpha$.  Restrict to an open set on which $|K|^2 >0$ (in the Chen-Teo metric this could correspond to the Killing field $\partial_\tau$ in the standard chart). Then (\ref{Liediv0}) can be inverted to yield
\begin{equation}\label{omegaKilling}
\omega = \frac{1}{\lambda} (K \wedge \td\alpha + \star (K \wedge \td\alpha) )  = (\td\tau +\Omega) \wedge \td\alpha + \star\left[(\td\tau + \Omega)\wedge \td\alpha \right]
\end{equation} which is manifestly self-dual. We have
\begin{eqnarray*}
\td\omega & =& \td \Omega \wedge \td\alpha + \td \star \left[(\td\tau + \Omega) \wedge \td\alpha\right]\\
&=& -\frac{1}{\lambda^2}  \td\alpha \wedge \star_h T + \td\left[ \frac{1}{\lambda} \star_h \td\alpha \right ] \\
&=&  -\frac{1}{\lambda^2}  \td\alpha \wedge \star_h T - \frac{1}{\lambda^2} \td\lambda \wedge \star_h \td\alpha + \frac{1}{\lambda} \td \star_h \td\alpha
\end{eqnarray*}
so the condition $\td\omega =0$ is equivalent to
\begin{equation}
\td \star_h \td \alpha - \frac{1}{\lambda} \td \alpha \wedge \star_h (T + \td \lambda) =0 .
\end{equation} This is equivalent to the following elliptic second order PDE for $\alpha$: 
\begin{equation}\label{scalarSD}
\Delta_h \alpha - \frac{1}{\lambda} \td(\lambda+\zeta) \cdot_h \td\alpha  =0.
\end{equation}
Similarly the anti-self-dual form $(\td\tau +\Omega) \wedge \td\alpha - \star(\td\tau + \Omega)\wedge \td\alpha $ is closed if and only if
\begin{equation}\label{scalarASD}
 \Delta_h \alpha  -\frac{1}{\lambda} \td (\lambda -\zeta) \cdot_h \td\alpha  =0.
 \end{equation}
 It is natural to associate an  $L^2$ energy density $E[\alpha]$ to (anti)-self-dual two forms:
\begin{eqnarray}\label{L2cond}
\omega \wedge \star \omega =  \frac{2}{\lambda} |\td\alpha|_g^2 \; \td \text{Vol}(g) = 2 |\td\alpha|^2 \; \td \text{Vol}(g) := E[\alpha]\, .
\end{eqnarray} 
\begin{prop} \label{Gibbons} Consider a four-dimensional Ricci flat metric admitting a local one-parameter family of isometries generated by the Killing vector field $K$. Then $\alpha = \alpha_+ := \lambda + \zeta$ satisfies  \eqref{scalarSD}  and $\alpha = \alpha_- := \lambda - \zeta$ satisfies \eqref{scalarASD} where $\lambda = g(K,K)$ and $T = \star_g (K \wedge \td K)$.
\end{prop}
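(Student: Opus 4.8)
The plan is to verify \eqref{scalarSD}--\eqref{scalarASD} for the stated $\alpha_\pm$ by a direct computation; everything needed is already encoded in the reduced Einstein system \eqref{Ricciflat} and the auxiliary equation \eqref{Laptau}, and the argument reduces to recognising a perfect square. First I would record the two scalar identities satisfied by $\lambda$ and $\zeta$. Working on a simply connected open set where $K\neq 0$, we have $T=\td\zeta$, hence $|T|^2=|\td\zeta|^2$, so the second line of \eqref{Ricciflat} reads
\[
\Delta_h\lambda \;=\; \frac{1}{\lambda}\bigl(|\td\lambda|^2 + |\td\zeta|^2\bigr),
\]
while \eqref{Laptau} gives $\Delta_h\zeta = \tfrac{2}{\lambda}\,\td\zeta\cdot_h\td\lambda$. (Should one prefer not to invoke \eqref{Laptau} as a black box, it follows from $\td\star_h(\lambda^{-2}T)=0$ by substituting $T=\td\zeta$ and expanding.)

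Next, setting $\alpha_\pm := \lambda\pm\zeta$ and adding or subtracting the two displayed equations, the cross term $\pm\tfrac{2}{\lambda}\td\lambda\cdot_h\td\zeta$ completes the square:
\[
\Delta_h\alpha_\pm \;=\; \frac{1}{\lambda}\bigl(|\td\lambda|^2 \pm 2\,\td\lambda\cdot_h\td\zeta + |\td\zeta|^2\bigr) \;=\; \frac{1}{\lambda}\,\td(\lambda\pm\zeta)\cdot_h\td(\lambda\pm\zeta) \;=\; \frac{1}{\lambda}\,\td\alpha_\pm\cdot_h\td\alpha_\pm.
\]
Finally, I would observe that the gradient term in \eqref{scalarSD} evaluated at $\alpha=\alpha_+$ is exactly $\tfrac{1}{\lambda}\td(\lambda+\zeta)\cdot_h\td\alpha_+ = \tfrac{1}{\lambda}\td\alpha_+\cdot_h\td\alpha_+$, so the left-hand side of \eqref{scalarSD} vanishes identically at $\alpha=\alpha_+$; replacing $+$ by $-$ throughout shows the left-hand side of \eqref{scalarASD} vanishes at $\alpha=\alpha_-$. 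This establishes the proposition, and the two associated forms $(\td\tau+\Omega)\wedge\td\alpha_\pm \pm \star[(\td\tau+\Omega)\wedge\td\alpha_\pm]$ are then precisely the Gibbons forms $\omega^\pm = \td K \pm \star\,\td K$ of \eqref{GibbonsForms}, since $i_K\omega^+ = i_K(\td K) = \tfrac12\td\lambda + \tfrac12 i_K\star\td K$ and $T=\star_g(K\wedge\td K)$ identify $\alpha_+$ with $\lambda+\zeta$ up to an irrelevant constant.

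There is no real obstacle here: the substance of the statement lies in the already-derived system \eqref{Ricciflat} and \eqref{Laptau}, and the only manipulation is the completion of the square. The single point deserving a sentence of care is the local nature of $\zeta$ -- the identity $T=\td\zeta$, and hence the computation above, is valid on any simply connected region where $K\neq 0$, which is enough because \eqref{scalarSD}--\eqref{scalarASD} are local; in the Chen--Teo application $M$ is simply connected, so $\zeta$ is in fact globally defined.
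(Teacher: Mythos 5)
Your proof is correct and follows essentially the same route as the paper's: combine the $\Delta_h\lambda$ equation from \eqref{Ricciflat} with \eqref{Laptau} and complete the square to obtain $\Delta_h\alpha_\pm = \frac{1}{\lambda}|\td\alpha_\pm|^2$, which is exactly \eqref{scalarSD} (resp.\ \eqref{scalarASD}) evaluated at $\alpha=\alpha_\pm$. The closing aside identifying the resulting forms with the Gibbons forms $\td K\pm\star\td K$ is not needed for the proposition and contains a slip ($i_K\td K=-\td\lambda$ by Cartan's formula, not $\tfrac12\td\lambda+\cdots$); the paper handles that identification separately in a remark.
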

\begin{proof} We have from \eqref{Ricciflat} and \eqref{Laptau}
\begin{equation}
\Delta_h (\lambda \pm \zeta) = \frac{|\td \lambda|^2}{\lambda} + \frac{|T|^2}{\lambda} \pm \frac{2}{\lambda} \td \zeta \cdot \td \lambda = \frac{1}{\lambda}| \td \lambda \pm \td\zeta|^2.
\end{equation}
\end{proof} 
\begin{remark} The harmonic forms generated by the smooth functions $\alpha_{\pm} =  \lambda \pm \zeta$ are precisely the harmonic forms $\omega_{\pm}$  (\ref{GibbonsForms}). To see this, note the identity, which holds for the one-form $K$ associated to a Killing vector field:
\begin{equation}
\td K = -\frac{1}{\lambda} \left[ \star (K \wedge T) + K \wedge \td \lambda\right]
\end{equation} This is derived by a simple computation using the definition of the twist form $T$. We then find that 
\begin{equation}
\begin{aligned}
\td K \pm \star \td K &= - \frac{1}{\lambda} \left[ \star (K \wedge T) + K \wedge \td \lambda \pm \star (K \wedge \td \lambda) \pm K \wedge T \right] \\
& = -\frac{1}{\lambda} \left[ K \wedge \td (\lambda \pm \zeta) \pm \star (K \wedge \td (\lambda \pm  \zeta))\right].
\end{aligned} 
\end{equation} where we used $T = \td \zeta$.  As observed above, if $\text{Ric}(g) =0$, the above (anti-)self-dual form is automatically harmonic. We easily read off this corresponds to the choice $\alpha_\pm = \lambda \pm \zeta$ . 
\end{remark}
\begin{remark} The (anti-)self-dual forms generated by solutions of \eqref{scalarSD} and \eqref{scalarASD} respectively can be expressed as the curvatures of locally defined gauge potentials as follows.   We may rewrite the PDEs \eqref{scalarSD}, \eqref{scalarASD} in the form of a conserved current
\begin{equation}\label{conserved}
\td \star_h \left[ \frac{\td \alpha}{\lambda} \mp \frac{\alpha \td \zeta}{\lambda^2} \right] =0
\end{equation} where we used the fact $\td \star_h (\lambda^{-2} \td \zeta)=0$ which follows from the reduced system \eqref{Ricciflat}.  This implies the existence of locally defined one-forms $P_\pm$ satisfying
\begin{equation} \label{potential}
\td P_\pm =\star_h \left[ \frac{\td \alpha}{\lambda} \mp \frac{\alpha \td \zeta}{\lambda^2} \right] . 
\end{equation} This allows for the construction of local connection one-forms 
\begin{equation}
\td A^\pm = \frac{1}{\lambda} \left( K \wedge \td \alpha \pm \star (K \wedge \td \alpha)\right)
\end{equation} where 
\begin{equation}\label{gaugefield}
A^\pm := -\alpha (\td \tau + \Omega) \pm P_\pm .
\end{equation}
\end{remark}

Equation (\ref{scalarASD}) was considered by \cite{GKT} (up to a Moebius transformation) in the context of constructing six-dimensional supergravity solutions fibering over the Chen-Teo metric. They found a second local solution  (equation (2.38)  \cite{GKT}) which corresponds to solution (\ref{alpha2}). We will show this local solution extends to a global, finite energy solution and therefore determines the last remaining basis vector for $\mathcal{H}^2(M,g)$.

\section{The Chen-Teo metric on $\mathbb{C}P^2 \setminus S^1$}\label{CTmet}

Recently Chen-Teo, using the integrability properties of the vacuum Einstein equations restricted to $T^2$-invariant solutions,  constructed a new two-parameter family of  AF gravitational instantons on $\mathbb{C}P^2 \setminus S^1$ \cite{CT1}. The metric is cohomogeneity-two and admits a torus action as isometries, thus producing an explicit counterexample to Lapades' conjecture \cite{L}.  We review properties of the Chen-Teo instanton in the following section. 

\subsection{The local metrics} Consider the following family of Ricci flat metrics $g$  given in local coordinates $(\tau, x, y, \phi)$ as follows \cite{CT2}:
The local form of the metric is:
\begin{equation}\label{g}
g = \frac{F(x,y)}{(x-y)H(x,y)} \left( \td\tau + \frac{G(x,y)}{F(x,y)} \td\phi\right)^2 + \frac{\kappa H(x,y)}{(x-y)^3} \left( \frac{\td x^2}{X(x)} - \frac{\td y^2}{Y(y)} - \frac{X(x)Y(y)}{\kappa F(x,y)} \td\phi^2\right)
\end{equation} where $X = P(x), Y = P(y)$ are quartic polynomials 
\begin{equation}
P(u) = a_0 + a_1 u + a_2 u^2 + a_3 u^3 + a_4 u^4
\end{equation} and $F(x,y), H(x,y), G(x,y)$ are polynomials in $x,y$ given by
\begin{equation}
\begin{aligned}
F(x,y) & = y^2 X - x^2 Y\\
H(x,y) & =(\nu x + y)\left[(\nu x - y)(a_1 - a_3 x y) - 2(1 - \nu)(a_0 -a_4 x^2 y^2)\right] \\
G(x,y) & = (\nu^2 a_0 + 2 \nu a_3 y^3 + 2 \nu a_4 y^4 - a_4 y^4)X + (a_0 - 2\nu a_0 - 2\nu a_1 x - \nu^2 a_4 x^4) Y
\end{aligned}
\end{equation} In what follows, we will omit the explicit dependence on the coordinates $x,y$ and simply write $F, H, G$, etc. The whole solutions has 7 parameters: the 5 $a_i$ and two other parameters $\nu$ and $\kappa$.  The local metric \eqref{g} is a solution to the positive-signature vacuum Einstein equations, i.e. 
\begin{equation}
\text{Ric}(g) =0, 
\end{equation} for any choice of the $(a_i, \nu, \kappa)$ . Two of the $a_i$ can be fixed by using scaling  symmetries of the metric acting on the $a_i$ and $(x,y)$, leaving a five-parameter family of local metrics. The remaining constants $a_i$ are not totally arbitrary, since we will impose that the quartic $P(u)$ to have four real roots, corresponding to fixed point sets of the $T^2$ action generated by the commuting Killing vector fields $\frac{\partial}{\partial \tau}, \frac{\partial}{\partial \phi}$. Note that $\kappa$ is just an overall scaling (analogous to the parameter $m$ of Schwarzschild) so we should be able to fix this without loss of generality.   Assuming the parameters and coordinates are chosen such that $H >  0$ and $x>y$) in this chart, we have
\begin{equation}
\sqrt{\det g} = \frac{ \kappa H}{(x-y)^5}. 
\end{equation}  As explained in Section \ref{Reduction} we can perform a reduction to three dimensions by  choosing 
\begin{equation}
K = \frac{\partial}{\partial \tau}
\end{equation} to read off the metric $h$ on the three-dimensional space of orbits: 
\begin{equation}
h = \frac{\kappa F}{(x-y)^4} \left( \frac{\td x^2}{X} - \frac{\td y^2}{Y} - \frac{XY}{\kappa F} \td\phi^2\right)\; ,  \qquad \sqrt{\det h} = \frac{\kappa F}{(x-y)^6}. 
\end{equation} We read off
\begin{equation}
|K|^2 = \lambda = \frac{F}{(x-y) H}, \qquad \Omega = \frac{G}{F} d\phi
\end{equation}  and it can be checked explicitly that the twist form $T$ is indeed closed:
\begin{equation}
T = -\lambda^2 \star_h \td\Omega  =- \frac{F^2 \partial_x \Omega}{H^2 Y} \td y  - \frac{F^2 \partial_y \Omega}{H^2 X} \td x
\end{equation} and as discussed above, we may write $T = \td\zeta$ for some locally defined function $\zeta$ . 
A computation reveals that
\begin{align}
\lambda &=  \frac{(x-y) (\nu x + y ) (a_1 - a_3 x y)}{2(\nu -1) H} - \frac{x+y}{2(\nu -1)(\nu x + y)} \\
\zeta & = - \frac{(x-y) (\nu x + y ) (a_1 - a_3 x y)}{2(\nu -1) H}  -\frac{x+y}{2(\nu -1)(\nu x + y)} 
\end{align}
where we have fixed an integration constant in writing down $\zeta$.  Note that the Laplacian of a scalar function $f$ with respect to $h$ is given by
\begin{equation}
\Delta_h f = \frac{(x-y)^6}{F} \left[ \partial_x \left( \frac{F h^{xx}}{(x-y)^6} \partial_x f \right) + \partial_y \left( \frac{F h^{yy}}{(x-y)^6}  \partial_y f \right) \right]
\end{equation}  As explained in \S \ref{Reduction} we immediately find that the functions
\begin{equation}\label{alphapm}
\alpha_+ = -\frac{x+y}{(\nu -1)(\nu x + y)}, \qquad \alpha_- = \frac{(x-y) (\nu x + y ) (a_1 - a_3 x y)}{(\nu -1) H}
\end{equation} generate a self-dual and anti self-dual harmonic forms $\omega^\pm$  according to Proposition \ref{Gibbons}. As mentioned above, these harmonic forms arise automatically as a consequence of $\text{Ric}(g) =0$ and a local isometry.   Remarkably, we have explicitly checked that
\begin{equation}\label{alpha2}
\alpha_2 = \frac{\alpha_-}{(a_1 - a_3 x y )} = \frac{(x-y) (\nu  x + y)}{(\nu -1) H}
\end{equation} produces another solution to \eqref{scalarASD} and hence generates  a second anti self-dual harmonic 2-form.  As explained at the end of \S \ref{Reduction}, we have obtained this solution rather indirectly from the construction of certain solutions of six-dimensional supergravity admitting Killing spinors \cite{GKT}.  

The locally defined gauge fields \eqref{gaugefield} associated to the (anti-)self-dual harmonic forms $\omega_{\pm}, \omega_2$ can be obtained by explicitly integrating \eqref{potential} in the above coordinate chart.  If we define the smooth function
\begin{equation}
Q:= \frac{ x(y + x \nu)(a_1 - a_3 x y )Y}{y (1-\nu)(x - y)(a_0 (x+y) - x y (x y (a_3 + a_4(x+y)) - a_1))} ,
\end{equation} then the gauge fields associated to $\omega_\pm$ can be expressed as
\begin{equation}\label{Apm}
A^\pm = -\alpha_\pm (\td \tau + \Omega) \pm \left(Q + \frac{a_3 y^2 - a_1 \nu}{y(1 - \nu)}\right)  \td \phi
\end{equation} whereas the gauge field associated to $\omega_2$ is given by
\begin{equation}\label{A2}
A^-_2 = -\alpha_2 (\td \tau + \Omega) - \left( \frac{Q}{a_1 - a_3 x y}  - \frac{\nu}{y(1 - \nu)}\right) \td \phi. 
\end{equation}  The connection 1-forms \eqref{Apm}, \eqref{A2} have been expressed in a gauge in which $\td \star A =0$. 
\subsection{Global Analysis} The seven-parameter family of local metrics \eqref{g} is sufficiently general that it can be extended to global metrics on various manifolds that are asymptotically flat, asymptotically locally flat, or asymptotically locally Euclidean~\cite{CT2}.  We are interested in the subfamily that extends to a globally smooth, asymptotically flat metric on $\mathbb{C}P^{2} \setminus S^1$~\cite{CT1, CT2}.  The proof of this  was carried out in the original parameterization \cite{CT1},  although in the second parametrization \cite{CT2} of the solution used here,  not all restrictions on the parameters are given and so we review them below.   To ensure positive-definite signature of the metric, we will restrict $x>y$ and impose the conditions $F, H > 0$ and $\kappa >0$, as well as $ Y< 0 < X$.   The metric $g$ degenerates at the zeroes of $X, Y$ and we will see below these correspond to fixed point sets of the torus action.  An explicit calculation of the Riemann tensor of $g$ reveals that $\text{Riem}(g) \to 0$ as $ x - y \to 0^+$.  We identify this region as an asymptotically flat end of $(M,g)$.   In this end, 
\begin{equation}\label{K2inf}
|K|^2 \to \frac{1}{1 -\nu^2} \qquad \text{as } x-y \to 0^+. 
\end{equation} Hence provided $|\nu| < 1$, the norm of the distinguished Killing vector field $K$ approaches a finite constant value as required (the cases $\nu = \pm 1$ in fact correspond to asymptotically locally Euclidean metrics).   The local metric admits a scaling symmetry so that $\nu \in [-1,1]$ can always be arranged \cite{CT2}. We will in addition restrict to $|\nu| < 1$ in what follows.  Note that the $(x,y)$ chart degenerates in this limit and spherical coordinates must be introduced in this end.  We will show  below that $g$ indeed approaches the model geometry $g_0$ \eqref{model}.

The two-dimensional subspaces of the tangent space orthogonal to the commuting Killing vector fields  are integrable as a consequence of $\text{Ric}(g) =0$ and Frobenius' theorem. The space of these orbits are parameterized by the coordinates $x,y$.  Indeed the metric $g$ \eqref{g} is of the general Weyl-Papapetrou form \cite{Hollands:2007aj}
\begin{equation}
g = \hat{g}_{ij} \td x^i \td x ^j + G_{AB} \td \xi^A \td \xi^B
\end{equation} where $x^i = (x,y)$ and $\xi^i = (\tau, \phi)$ and the functions $\hat{g}_{ij}, G_{AB}$ are independent of $\xi^i$.  In particular, $G_{AB}$ is simply the restriction of $g$ to the generators of the torus action:
\begin{equation}
\tilde{g} = \frac{F}{(x-y) H} \left(\td \tau + \frac{G}{F} \td \phi \right)^2 + \frac{ H X (-Y)}{(x-y)^3 F} \td \phi^2. 
\end{equation} Setting $\rho = \sqrt{\det \tilde g}$ to be the  `area density' of the orbits of the isometry group (note that $\det \tilde g \geq 0$) one finds explicitly that 
\begin{equation}\label{rho}
\rho^2 = -\frac{X Y}{(x-y)^4}. 
\end{equation} It is a straightforward exercise to show, using $\text{Ric}(g) =0$, that $\rho$ is harmonic on the orbit space $(B, \hat{g})$  where $B:= M \setminus T^2$, i.e. $\Delta_{2} \rho =0$ where $\Delta_{2}$ is the Laplacian associated to $g_2$.  Its harmonic conjugate, defined by $\td z = \star_2 \td \rho$ is given by (after fixing an integration constant)
\begin{equation}\label{z}
z = \frac{2(a_0 + a_2 x y + a_4 x^2 y^2) + (x+y)(a_1 + a_3 x y)}{2(x-y)^2}. 
\end{equation}  Under the assumption that there are no $p \in M$ with a discrete isotropy subgroup, $B$ can to shown  to be \cite[Prop. 1]{Hollands:2007aj} a two-dimensional simply connected manifold with boundary and corners, i.e. a manifold locally modelled over $\R \times \R$ (interior points) , $\R_+ \times \R$ (one-dimensional boundary segments) and $\R_+ \times \R_+$ (corners).   In the particular case of $M = \mathbb{C}P \setminus S^1$ the requirement of no discrete isotropy subgroups in the interior of $B$ is satisfied and we show below (see \eqref{conicalcond})  that there are none on $\partial B$. The lack of interior orbifold points can be explicitly seen by noting that in terms of the $(\tau,x,y,\phi)$ chart, the interior of $B$ is parameterized by $(x,y)$ lying in an open rectangle and the torus action simply acts as identifications on $(\tau,\phi)$.  Alternatively,  the conditions required in \cite[Thm 4]{Hollands:2008fm} are satisfied\footnote{To see this, one applies the results of \cite{Hollands:2008fm} (taking into account Remark 2 of Theorem 4)  to the stationary asymptotically Kaluza-Klein spacetime  $\mathbb{R} \times M$ with Lorentzian metric $g_5 = -\td t^2 + g$.}. 

On corner and interior points, $G_{AB}$ has rank 0 and rank 2 respectively.  On each boundary segment, $G_{AB}$ is rank 1, and admits a null vector $v^i \ell_i$ where $v^i \in \Z$ and $\ell_i$ is a basis of the Killing vector fields with $2\pi-$periodic orbits.  Hence the boundary segments represent fixed point sets of a particular generator of the torus action.  If $v^i, w^i$ are `rod vectors' corresponding to two adjacent boundary segments meeting at a corner,  then smoothness requires the `compatibility' condition
\begin{equation}\label{adjreg}
\begin{pmatrix} v^1 & w^1 \\ v^2 & w^2 \end{pmatrix}  \in GL(2, \Z). 
\end{equation} This condition is equivalent to requiring that $B$ has no orbifold singularities \cite{Hollands:2008fm} at the corner points.  $B$ is in fact homeomorphic to the upper half plane $\R^2_+$ \cite{Hollands:2007aj} .  The harmonic functions $(\rho,z)$ furnish global coordinates on  $B$ where $\rho > 0$ and $z \in \R$.  The boundary $\rho =0 $ corresponds to the fixed-point sets of the torus action.  This provides a convenient realization of the torus action on $M$ as follows.  The boundary segments are represented as intervals  $I_k : z_k < z < z_{k+1}$ on the $z-$axis with associated rod vectors $v^i_{(j)} \ell_i$. The corner points (or `nuts') correspond to points $z_k$ at which both $\ell_i$ vanish.  At interior points of $B$, $\rho > 0$, and the torus action is free.  We refer to the specification of the rods $(I_j, v^i_j)$  as the \emph{interval data}.  The interval data determines the topology of $M$.  

\subsubsection{Two-parameter family of gravitational instaontons on $\mathbb{CP}^2 \setminus S^1$}  Suppose that the quartic $P$ admits four real distinct roots $x_1 < x_2 < x_3 < x_4$. Note that this places restrictions on the parameters $a_i$.  To ensure that the family of cohomogeneity-two local metrics have positive definite signature, it is sufficient to restrict the range of $(x,y)$ to the rectangle 
\begin{equation}
-\infty < x_1 < y < x_2 < x < x_3 
\end{equation} with the asymptotic region corresponding to $x,  y \to x_2$. The sets $x = x_i, y = x_i$, $i = 1,2, 3$, correspond to fixed point sets of the torus action and the metric will degenerate there. 

Consider the two-parameter family $(\kappa, \xi)$ of metrics obtained by taking $P(x)$ to be the monic cubic polynomial with roots
\begin{equation}
x_1 = -4 \xi^3 (1 - \xi), \qquad x_2 = -\xi(1 - 2 \xi + 2 \xi^2) , \qquad x_3 = 1 - 2\xi.
\end{equation}
and a formal fourth root $x_4 = \infty$. Then
\begin{gather*}
\nu = -2\xi^2, \quad a_4 = 0, \quad a_3 = 1, \quad a_2 = -1 + 3 \xi - 2 \xi^2 + 6 \xi^3 - 4 \xi^4, \\ a_1 = -(\xi - 4 \xi^2 + 10 \xi^3 - 20 \xi^4 + 20 \xi^5 - 16 \xi^6 + 8 \xi^7), \quad a_0 =-4(1-\xi) \xi^4 (1 - 2 \xi)(1 - 2\xi + 2 \xi^2). 
\end{gather*} 

The $(x,y)$ coordinate chart degenerates in the asymptotic region $x, y \to x_2$.  Introduce a new chart $(r,\theta)$ implicitly by 
\begin{equation}\label{AFendchart}
x = x_2 - \frac{x_2 \sqrt{\kappa (1-\nu^2)}}{r} \cos^2 \frac{\theta}{2} , \qquad y = x_2 + \frac{x_2 \sqrt{k(1-\nu^2)}}{r} \sin^2 \frac{\theta}{2} . 
\end{equation} where $\theta \in (0,\pi)$. The asymptotic region corresponds to $r \to \infty$. A computation shows that
\begin{equation}
\frac{G}{F} \to  \frac{4 \xi^3 ( 1 - 4 \xi + 8 \xi^2 - 12 \xi^3 + 16 \xi^4 - 8 \xi^5)}{1 - 2\xi + 2 \xi^2} + O(r^{-1}), 
\end{equation} and
\begin{equation}
g_{rr} = 1 + \frac{\kappa (1 + 2\xi^2)^2}{\sqrt{\kappa(1 - 4 \xi^4)} r} + O(r^{-2}) , \qquad g_{\theta \theta} = r^2 ( 1 + O(r^{-1})) \qquad g_{\phi \phi} = O(r^2 \sin^2\theta)
\end{equation} This means that as $r \to \infty$ ,  the metric takes the manifestly asymptotically flat form
\begin{equation}\label{AFend}
g \to \td\hat \tau^2 + \td r^2 + r^2 (\td\theta^2 + \sin^2 \theta \td\hat \phi^2) + \mathcal{O}(r^{-1}) 
\end{equation} where $(\hat\tau, \hat \phi)$ are appropriately chosen linear combinations of  $(\tau, \phi)$.  Note that this is only possible because $G/F$ approaches a constant, and in particular there is no `NUT charge' (namely, there is no $N \cos\theta$ term at $O(1)$ order in the expansion). Regularity of the metric requires that appropriate identifications are made in the $(\hat \tau, \hat \phi)$ plane as described below. 

Note that the restriction $\nu^2 < 1$ requires that $|\xi| < 1/\sqrt{2}$. We also have to enforce the conditions $x_1 < x_2 < x_3$ . It can be checked these conditions are met if and only if $\xi \in (0, \frac{1}{2})$ or $\frac{1}{2} < \xi < 1/\sqrt{2}$.  However, the local metric has a curvature singularity when $y = -\nu x=2\xi^2 x$, where the function $H$ vanishes.  Since in addition $y < x_2 < x$, we can avoid $y = -\nu x$ if
\begin{equation}
y < x_2 < 2\xi^2 x_2 < 2\xi^2 x
\end{equation} which in turn requires $x_2 (1 - 2\xi^2) < 0$.  Since the second factor is positive, we require $x_2 = -\xi(1 - 2\xi + 2 \xi^2)< 0$.  The second factor is positive for all $\xi$,  so we require $\xi > 0$.  In summary we find one must restrict the parameters such that  $\kappa > 0$ and 
\begin{equation}
\frac{1}{2} < \xi < \frac{1}{\sqrt{2}}. 
\end{equation}

\subsubsection{The Rod Structure} To analyze the metric near the fixed points of the torus action, it is convenient to use the global coordinates on the orbit space $\rho \in \R^+, z \in \R$ introduced in \eqref{rho}, \eqref{z} respectively.  Under those coordinate transformation, the  vertices in the boundary rectangle in the $(x,y)$ plane are mapped to $(\rho,z) = (0,z_i)$ with  $z_1 < z_2 < z_3$, $z_i = x_i/2$, $i = 1,2,3$.  In detail, the vertices $(x,y) = (x_2,x_1) \to z_3, (x_3,x_1) \to z_2, (x_3,x_2) \to z_1$.  In the framework of the `interval structure' discussed above we have four rods: 
\begin{enumerate}
\item $I_1$: $z_3 < z < \infty$ or $x= x_2, y \in (x_1, x_2) $, a semi-infinite rod extending to $z \to \infty$
\item $I_2$: $z_2 < z < z_3$ or $y = x_1, x \in (x_3, x_3)$ , a finite rod;
\item $I_3$: $z_1 < z < z_2$ or $x = x_3, y \in (x_1, x_2)$ a finite rod;
\item $I_4$: $-\infty < z < z_1$ or $y = x_1, x \in (x_2, x_3)$, a semi infinite rod.
\end{enumerate} The associated normalized Killing fields $\ell_I$ that degenerate on each rod are normalized to generate $2\pi-$ periodic orbits.   On a given interval $I_I$, the restriction of the metric to the Killing vector fields is
\begin{equation}
\tilde{g}_{ij} =\frac{1}{H(x-y)}  \begin{pmatrix}  F & G \\ G & \frac{G^2}{F} \end{pmatrix}
\end{equation} which is obviously degenerate.  The associated normalized Killing fields $\ell_I$ that degenerate on each rod are normalized to generate $2\pi-$periodic orbits.  This requires 
\begin{equation}
\lim_{\rho \to 0} \frac{ \td|\ell_I|^2 \cdot \td|\ell_I|^2}{4 |\ell_I|^2}\Bigg\vert_{z \in I_I} =1.
\end{equation}  Explicitly, in terms of the generators $(\partial_\tau, \partial_\phi)$, a computation gives
\begin{equation}\label{ellinspi}
\ell_I = \frac{1}{k_I} \left( b_I \frac{\partial}{\partial \tau} + \frac{\partial}{\partial \phi} \right)
\end{equation} where
\begin{equation}
\begin{aligned}
k_1  &= \frac{(1 - \xi) ( 1 - 2 \xi) ( 1 - 2\xi^2)^2}{2 \sqrt{\kappa} (1 - 2\xi + 2 \xi^2)},  \qquad  k_2 = \frac{(1 - 2\xi)(1 - 2 \xi^2)^2 (1 - 2\xi + 2 \xi^2)}{8 \sqrt{\kappa} (1- \xi) \xi^2}, \\
k_3 & = \frac{(1 - \xi)(1 - 2\xi^2)^2(1 - 2\xi + 2\xi^2)}{2 \sqrt{\kappa} ( 1 - 2\xi)} , \qquad k_4 = k_1,
\end{aligned} 
\end{equation} and 
\begin{equation}
\begin{aligned}
b_1   &= \frac{4 \xi^3 (-1 + 4\xi (1 - \xi)^2(1 + 2 \xi^2))}{1 - 2\xi (1 - \xi)} , \qquad b_2  = \frac{\xi^2 ( 1 - 2\xi(2 - 3\xi + 10 \xi^2 - 16 \xi^3 + 8 \xi^4))}{1 - \xi}, \\
b_3 & =  \frac{4\xi^3(1-3\xi +7 \xi^2-12\xi^3+6\xi^4)}{(2 \xi -1)}, \qquad b_4 = b_1.
\end{aligned}
\end{equation}  

 It is easily verified that
\begin{equation}\label{conicalcond}
\ell_1 = \ell_4 = \ell_2 + \ell_3.
\end{equation} and so in terms of the basis of $T^2$ generated by $(\ell_1, \ell_2)$, the rod vectors $v_I^i$ are given by $v_1^i = (1,0), v_2^i = (0,1), v_3^i = (-1,1)$, and $v_4^i = (1,0)$.  It is easy to observe that the regularity condition on adjacent rod vectors \eqref{adjreg} are satisfied. 

Let us introduce angular coordinates $\phi^i, i = 1,2$ adapted to  the Killing vector fields $\ell_i$, so that $\phi^i \sim \phi^i + 2\pi$ and 
\begin{equation}
\frac{\partial}{\partial \phi^i } = \ell_i. 
\end{equation} In terms of the original angles $(\tau, \phi)$ we have
\begin{equation}
\tau = \frac{b_1}{k_1} \phi^1 +  \frac{b_2}{k_2} \phi^2, \qquad \phi = \frac{\phi^1}{k_1}  + \frac{\phi^2}{k_2} ,
\end{equation} The identifications 
\begin{equation}
(\phi^1, \phi^2) \sim (\phi^1 + 2\pi, \phi^2), \qquad (\phi^1, \phi^2) \sim (\phi^1, \phi^2 + 2\pi),
\end{equation} induce the following identifications in $(\tau, \phi)$ plane: 
\begin{equation}
(\tau, \phi) \sim \left(\tau + \frac{2\pi b_1}{k_1} , \phi + \frac{2\pi}{k_1} \right),  \qquad  (\tau, \phi) \sim \left( \tau + \frac{2\pi b_2}{k_2}, \phi + \frac{2\pi}{k_2}\right).
\end{equation} Note that the coordinates $(\hat\tau, \hat\phi)$ introduced in the asymptotic region (see \eqref{AFendchart}) are given by
\begin{equation}
\hat\phi = \phi^1 + \frac{4 ( 1- \xi)^2 \xi^2}{(1 - 2\xi + 2 \xi^2)^2} \phi^2, \qquad  \hat\tau = \frac{1}{\sqrt{1 - \nu^2}}  \frac{8 \sqrt{\kappa} \xi^4}{(1 - 2\xi + 2\xi^2)^2} \phi^2
\end{equation} in terms of which the above identifications can be written 
\begin{equation}
(\hat \tau, \hat \phi) \sim (\hat \tau , \hat \phi + 2\pi), \qquad (\hat\tau, \hat \phi) \sim \left(\hat \tau +  \frac{16\pi}{\sqrt{1 - \nu^2}}  \frac{ \sqrt{\kappa} \xi^4}{(1 - 2\xi + 2\xi^2)^2}, 
 \hat\phi + \frac{8\pi ( 1- \xi)^2 \xi^2}{(1 - 2\xi + 2 \xi^2)^2}  \right) . 
\end{equation} This demonstrates that the metric $g$ is indeed asymptotically flat in the sense discussed in \S \ref{ALFintro} for appropriate choices of $(\beta, \Omega)$. 

Let us consider the requirement that the Killing field with constant norm at infinity actually generates closed orbits (i.e. a $U(1)$-action rather than an $\R$-action). In terms of the $(\ell_1, \ell_2)$ basis, 
\begin{equation}
\frac{\partial}{\partial \tau} = \frac{k_2}{(b_1 - b_2)} \left [\frac{k_1}{k_2} \ell_1 - \ell_2 \right].
\end{equation}  Therefore $\frac{\partial}{\partial \tau}$ generates a $U(1)$-action if and only if 
\begin{equation}
\frac{k_1}{k_2} \in \mathbb{\mathbb{Q}}.
\end{equation} 
This $U(1)$ action would restrict to a $U(1)$-principal bundle near infinity only if  $\frac{k_1}{k_2} \in \Z$. However it can be verified that this integrality condition is not satisfied for any allowed value of $\xi$.  

We conclude this section by verifying that we have constructed harmonic forms with finite $L^2$ energy. We calculate the energy explicitly in (\ref{intersection theory}).  

\begin{prop}\label{finen} The (anti)-self-dual forms $\omega_\pm$ generated by the smooth functions \eqref{alphapm}, and the anti-self-dual form $\omega_2$ generated by \eqref{alpha2} have finite $L^2$ energy.
\end{prop}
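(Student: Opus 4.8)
The plan is to reduce the statement to the convergence of a single scalar integral. By \eqref{L2cond}, finiteness of the $L^2$ energy of the (anti-)self-dual form generated by a function $\alpha$ amounts to $\int_M |\td\alpha|^2\,\td\text{Vol}(g) < \infty$, so it is enough to verify this for each of $\alpha_+,\alpha_-,\alpha_2$ of \eqref{alphapm}--\eqref{alpha2}. First I would write $M = M_0 \cup \E$, where $\E \cong \{r \geq R\}$ is the asymptotically flat end of \eqref{AFendchart} and $M_0 := M \setminus \E$ is compact. Over $M_0$ there is nothing to do: the forms $\omega_\pm,\omega_2$ are globally smooth (see \S\ref{Smoothness of the functions (x,y)}) and $g$ is smooth, so $\omega \wedge \star\,\omega$ is a smooth $4$-form and hence integrable over the compact region $M_0$. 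Everything reduces to controlling the energy density on $\E$.

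On $\E$ we have $x,y \to x_2$, equivalently $x-y \to 0^+$ and $r \to \infty$, so the next step is to record the orders of vanishing at the corner $(x_2,x_2)$ of the $(x,y)$-rectangle. Since the quartic $P$ has four distinct roots, $x_2$ is a \emph{simple} root, so $X = P(x)$ and $Y = P(y)$ each vanish to first order there and are both $O(x-y)$ on $\E$. From $\lambda = F/((x-y)H)$ and \eqref{K2inf} one gets $F = (x-y)\,\lambda H$ with $\lambda H$ bounded between positive constants near the corner; moreover $H \to H_0 \neq 0$ and $\nu x + y \to (1+\nu)x_2 \neq 0$ there. (Here the non-vanishing of $H_0$ and of $(1+\nu)x_2$ at the corner is exactly where one invokes the parameter restrictions of \S\ref{CTmet}: $|\nu|<1$, $x_2<0$, and the rectangle lying off the curvature singularity $y = -\nu x$.) Consequently each of $\alpha_+ = -\frac{x+y}{(\nu-1)(\nu x+y)}$, $\alpha_- = \frac{(x-y)(\nu x + y)(a_1 - a_3 xy)}{(\nu-1)H}$, $\alpha_2 = \frac{(x-y)(\nu x + y)}{(\nu-1)H}$ extends real-analytically across $(x_2,x_2)$, so $\partial_x\alpha$ and $\partial_y\alpha$ are bounded on $\E$.

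Then I would assemble the density and integrate. From the explicit $h$ of \S\ref{CTmet}, for $\alpha$ depending only on $(x,y)$ one has $|\td\alpha|^2 = \frac{(x-y)^4}{\kappa F}\bigl(X(\partial_x\alpha)^2 - Y(\partial_y\alpha)^2\bigr)$, which is $O((x-y)^4)$ on $\E$ by the preceding estimates (the $\frac{(x-y)^4}{F}$ factor is $O((x-y)^3)$ and the bracket is $O(x-y)$), while $\td\text{Vol}(g) = \frac{\kappa H}{(x-y)^5}\,\td\tau \wedge \td x \wedge \td y \wedge \td\phi$. Hence $\omega \wedge \star\,\omega = 2|\td\alpha|^2\,\td\text{Vol}(g) = O((x-y)^{-1})\,\td\tau \wedge \td x \wedge \td y \wedge \td\phi$. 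In the chart \eqref{AFendchart} one has $x-y = O(r^{-1})$ and $\td x \wedge \td y$ equal to a nonzero constant times $r^{-3}\sin\theta\,\td r \wedge \td\theta$, so this density becomes $O(r^{-2})\sin\theta\,\td r \wedge \td\theta \wedge \td\tau \wedge \td\phi$ — precisely the $O(r^{-2})$ decay quoted in the paragraph before \eqref{g}. Integrating out the angular variables ($\theta \in (0,\pi)$, and $\tau,\phi$, equivalently $\hat\tau,\hat\phi$, of finite period) leaves $\int_R^\infty O(r^{-2})\,\td r < \infty$. As a cross-check one can instead stay in $(x,y)$ coordinates: $\iint (x-y)^{-1}\,\td x\,\td y$ over a corner neighbourhood of $(x_2,x_2)$ in the rectangle converges, since in $u = x-x_2 \geq 0$, $v = x_2-y \geq 0$ one has $u + v \geq \sqrt{u^2+v^2}$. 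This treats all three forms at once.

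I expect the only genuinely non-routine part to be the asymptotic analysis on $\E$, and within it the one delicate point is the claim that $H$ and $\nu x + y$ stay bounded away from zero at the corner $(x_2,x_2)$; this is what forces one to use the global parameter restrictions of \S\ref{CTmet} rather than merely the local metric \eqref{g}. The contribution of $M_0$ is automatic once one knows that $\omega_\pm,\omega_2$ are smooth.
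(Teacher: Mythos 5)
Your proposal is correct and follows essentially the same route as the paper: split $M$ into a compact piece, where smoothness of $\alpha_\pm,\alpha_2$ gives integrability for free, and the asymptotically flat end, where the energy density decays like $O(r^{-2})\sin\theta\,\td r\wedge\td\theta\wedge\td\tau\wedge\td\phi$ and is therefore integrable. The only difference is that the paper simply quotes the leading asymptotics of $E[\alpha]$ in the $(r,\theta)$ chart as "a computation shows", whereas you derive the same $O(r^{-2})$ decay by bookkeeping the orders of vanishing of $X$, $Y$, $F$, $H$ at the corner $(x_2,x_2)$ — a useful elaboration, but not a different method.
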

\begin{proof} These harmonic forms have $L^2$ energy given by integrating the energy density $E[\alpha] \in \Lambda^4 M$  \eqref{L2cond} over $M$. Since the $\alpha$ are smooth these integrals will be finite over any compact $K \subset M$. Outside of this compact set we may introduce the asymptotically flat chart $(r,\theta)$ defined by \eqref{AFend}  . A computation shows that as $r \to \infty$, 
\begin{equation} \begin{aligned}
E[\alpha_+] & = E[\alpha_-]  =\left( \frac{\kappa (1 - \nu^2)^2}{(1 - 2 \xi^2)^4 r^4} + O(r^{-5})\right) \td \text{Vol}_g, \\ E[\alpha_2 ] &=\left( \frac{\kappa}{(1 - \xi)^2 \xi^2 (1 - 2\xi)^2 (1 - 4 \xi^2)^2 r^4} + O(r^{-5})\right) \td \text{Vol}_g, 
\end{aligned}
\end{equation}  where $\td \text{Vol}(g) \sim \mathcal{O}(r^2) \sin\theta \td \tau \wedge \td r \wedge \td \theta \wedge \td \phi$ as $ r \to \infty$.  Clearly there is a finite contribution to the $L^2$ energy for $R < r < \infty$ for some fixed $R$ and so outside a compact set, the $L^2$ energy is finite for this set of harmonic forms.  Hence the total $L^2$ energy is finite.   
\end{proof}\noindent We compute the $L^2$ energy exactly in \S \ref{INMPF}.

\section{Period integrals}\label{Period integrals}
The harmonic forms can be integrated over the bolts using the Atiyah-Bott fixed point localization formula (see \cite{AB} (3.8)).  In the present case the formula can be derived using elementary methods as follows.

Suppose $N$ is diffeomorphic to $S^2$ and  $X$ is a vector field on $N$ that integrates to a periodic flow of period $T > 0$. Then there exist cylindrical coordinates $\phi \in [0, 2\pi)$, $z \in [-1,1]$ on $N$ in which $X = \frac{2\pi}{T} \frac{\partial}{\partial \phi}$. Denote by $n,s \in N$ the north and south poles at $z=1,-1$ respectively.

Now suppose $\omega \in \Omega^2(N)$ is a 2-form for which $L_X \omega = 0$.  Then $\omega = f(z) \td \phi \wedge \td z$ for some smooth function $f: N \rightarrow \R$ depending only on $z$. Integrating gives
\begin{equation}\label{locform}
\pm \int_{N} \omega  =  2 \pi \int_{-1}^1 f(z) \td z  = T ( g(n) - g(s))
\end{equation}
where $g$ is a function satisfying $\td g =  \iota_X \omega =  \frac{2\pi}{T}   f(z) \td z$. The overall sign depends on the choice of orientation for $N$.

We can use (\ref{locform}) to integrate 2-forms
\begin{equation}
 \omega = \frac{1}{|K|^2} \left[ K \wedge \td \alpha - \star (K \wedge \td \alpha)\right]
 \end{equation}
over the finite bolts $B_I$, $I=2,3$.  This is valid because the vector field $K = \frac{ \partial}{\partial \tau}$ is tangent to $B_I$ and integrates to a periodic action on $B_I$. Since $ \iota_K \omega = \td \alpha$ we have 
\begin{equation}\label{finiterodint}
\pm \int_{B_I}  \omega  =  T ( \alpha(n) - \alpha(s))
\end{equation}
where $n,s$ are the two nuts of $B_I$ and  $L$ is the period of $K$ restricted to $B_I$.   Similarly, for the half infinite bolts $B_1$ and $B_4$, if let $s$ be the one nut, then
\begin{equation}\label{infiniterodint}
\pm \int_{B_I}  \omega  =  T ( \alpha(\infty) - \alpha(s))
\end{equation}
where $\alpha(\infty)$ is the limit of $\alpha(p)$ as $p$ goes to infinity along the bolt. 

To determine the period $T$, express 
$$ \frac{ \partial}{\partial \tau}  =  c_1 \ell_1  + c_I \ell_I.$$
Then since $\ell_I$ vanishes on $B_I$ and $\ell_1$ has period $2\pi$,  we see $K = \frac{ \partial}{\partial \tau}$ has period $2 \pi /|c_1|$.  In terms of formula (\ref{ellinspi}), for $I =2,3$ we have  
\begin{equation}\label{finrodintct}
\pm  \frac{1}{2\pi} \int_{B_I}  \omega =  \frac{(b_1 - b_I)}{k_1} ( \alpha(n) - \alpha(s)).
 \end{equation}

Similarly, for the infinite bolts $I=1,4$ we have
 \begin{equation}
\pm  \frac{1}{2\pi} \int_{B_I}  \omega =   \frac{(b_2 - b_I)}{k_2}  ( \alpha(\infty) - \alpha(s)).
 \end{equation}

\subsection{The anti-self-dual instantons}

The homology group $H^2(M;\Z) \cong \Z^2$ is generated by the two finite bolts $B_2$ and $B_3$.  Therefore a harmonic 2-form in $\mathcal{H}^2_-(M,g)$ represents an instanton if and only if its integrals over $B_2$ and $B_3$ both lie in $2\pi \Z$.

Recall that on $B_2$, $y = x_1$ and $ x_2 < x< x_3$ and on $B_3$, $x = x_3, x_1 < y < x_2$.  Applying (\ref{finrodintct}) we find
\begin{equation}
 \frac{1}{2\pi} \int_{B_2} \omega_{-}  = \frac{(b_1 - b_2)}{k_1} (\alpha_-(x_2,x_1) - \alpha_-(x_3,x_1) ) = \frac{2 \sqrt{\kappa}}{1 - 2\xi^2}.
\end{equation} Similarly we find
\begin{equation}
\frac{1}{2\pi} \int_{B_3} \omega_{-}  = \frac{(b_1 - b_3)}{k_1} \left(\alpha_-(x_3,x_1) - \alpha_-(x_3, x_2) \right)  = \frac{4 \xi^2 \sqrt{\kappa} }{1 - 2\xi^2}.
\end{equation}
Notice that by dropping the sign ambiguity, we have implicitly chosen orientations for $B_2$ and $B_3$. 

Since $\int_{B_I} \td K=0$ for $I=2,3$ by Stokes' Theorem,
\begin{equation}
 \frac{1}{2\pi} \int_{B_I} \omega_{+}   = \frac{1}{2\pi} \int_{B_I} \star \td K =  - \frac{1}{2\pi} \int_{B_I} \omega_{-}
\end{equation}
so therefore we have a ratio of periods
\begin{equation}\label{eqna}
 \frac{ \int_{B_2} \omega_{\pm}}{  \int_{B_3} \omega_{\pm}} =  \frac{1}{2\xi^2} .
 \end{equation}

Applying (\ref{finrodintct}) to the second anti-self-dual harmonic form $\omega_2$ associated to our second solution $\alpha_2$, we find
\begin{equation}
 \frac{1}{2\pi} \int_{B_2} \omega_2 =\frac{2 \sqrt{\kappa }}{\xi 
   (2 \xi-1 ) \left(1 -2\xi + 2 \xi ^2 \right) \left(1-4 \xi^4\right)}
\end{equation}  and 
\begin{equation}
\frac{1}{2\pi}  \int_{B_3} \omega_2 = \frac{2 \sqrt{\kappa }}{(1 -\xi ) 
  \left(1 -2\xi + 2 \xi ^2 \right)\left(1- 4 \xi^4 \right)}.
\end{equation}  
and therefore we have a ratio
\begin{equation}\label{eqnb}
 \frac{ \int_{B_2} \omega_2}{  \int_{B_3} \omega_2 }=  \frac{(\xi-1)}{\xi(1-2\x)}  .
 \end{equation}
The ratios (\ref{eqna}),(\ref{eqnb}) are unequal for the allowed values of $\xi \in ( \frac{1}{2}, \frac{1}{\sqrt{2}})$. Since $B_2, B_3$ form a basis for $H_2(M;\Z)$ we conclude that $\omega_-$ and $\omega_2$ represent linearly independent elements of $ H^2(M;\R)$ implying that the map $j$ in (\ref{commdiag}) is surjective.

To get an explicit formula for $F_A$, let
\begin{align}
\tilde{\omega}_- & := \frac{-1}{2 \sqrt{\kappa} } \omega_- &  \tilde{\omega}_2 & :=   \frac{\xi (2\xi-1)(1-\xi)(1-2\xi+2\xi^2)(2\xi^2+1)}{2 \sqrt{\kappa} } \omega_2
\end{align}
then
\begin{align}
 \nu_2 & := -  \tilde{\omega}_- + 2 \xi (\tilde{\omega}_- + \tilde{\omega}_2)  & \nu_3 & :=  \tilde{\omega}_-  - \xi^{-1}(\tilde{\omega}_- + \tilde{\omega}_2)
\end{align}
is the dual basis to $B_2, B_3$ satisfying for $I,J \in \{2,3\}$
$$ \frac{1}{2\pi} \int_{B_I} \nu_J  = \delta^I_J.$$ 

\begin{prop}
Let $P$ be a $U(1)$-principal bundle over $M$ with first Chern class $c_1(P) \in H^2(M;\Z)$ satisfying
$$ \int_{B_I} c_1(P) = m_I $$
for $I=2,3$.  Then $P$ admits a unique anti-self-dual instanton $A$ (up to gauge equivalence). It has curvature  
$$F_A  = m_2 \nu_2 + m_3 \nu_3.$$ 
\end{prop}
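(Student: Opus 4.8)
The plan is to combine the Hodge-theoretic facts assembled in \S\ref{Lchf} and in this section with the elementary Chern--Weil theory of line bundles. For \emph{existence}, I would take $\eta := m_2\nu_2 + m_3\nu_3$ as the candidate curvature. Since $\nu_2,\nu_3$ are real linear combinations of the anti-self-dual $L^2$ harmonic forms $\tilde\omega_-,\tilde\omega_2$, the form $\eta$ lies in $\mathcal{H}^2_-(M,g)$: it is closed, anti-self-dual, and has finite $L^2$ energy by Proposition \ref{finen}. The normalization $\frac{1}{2\pi}\int_{B_I}\nu_J = \delta^I_J$ recorded above gives $\frac{1}{2\pi}\int_{B_I}\eta = m_I$ for $I=2,3$, which are precisely the periods of $c_1(P)$; since $\{[B_2],[B_3]\}$ is a basis of $H_2(M;\Z)$ (and a fortiori of $H_2(M;\R)$), the de Rham class $\tfrac{1}{2\pi}[\eta]$ equals the image of $c_1(P)$ in $H^2(M;\R)$. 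By Chern--Weil the curvature of any connection $A_0$ on $P$ also represents $2\pi c_1(P)$ in de Rham cohomology, so $\eta - F_{A_0}$ is exact, say $\eta - F_{A_0} = db$ for a globally defined $1$-form $b$; then $A := A_0 + b$ is a connection on $P$ with $F_A = \eta = m_2\nu_2 + m_3\nu_3$. Since $F_A$ is anti-self-dual and $L^2$, $A$ is an anti-self-dual instanton, which also establishes the curvature formula.

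For \emph{uniqueness up to gauge}, let $A$ and $A'$ be anti-self-dual instantons on $P$. Their difference $a := A - A'$ is a globally defined $1$-form on $M$ (the adjoint bundle of a $U(1)$-bundle is trivial), and $da = F_A - F_{A'}$. Anti-self-duality of both curvatures forces $\star da = -da$, whence $d\star da = -d\,da = 0$, so $da$ is harmonic; finiteness of the two energies gives $F_A,F_{A'}\in L^2$, hence $da\in L^2$. Therefore $da\in\mathcal{H}^2_-(M,g)$. On the other hand $da$ is exact, so it is annihilated by the natural map $j\colon\mathcal{H}^2(M,g)\to H^2(M)$ of \eqref{commdiag}. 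But $\mathcal{H}^2_-(M,g)$ is spanned by $\omega_-$ and $\omega_2$, and the period computations of this section (the ratios \eqref{eqna} and \eqref{eqnb}, which differ on $(\tfrac{1}{2},\tfrac{1}{\sqrt{2}})$) show that $j(\omega_-)$ and $j(\omega_2)$ are linearly independent in $H^2(M;\R)\cong\R^2$; hence $j$ is injective on $\mathcal{H}^2_-(M,g)$. It follows that $da = 0$, so $a$ is closed, and since $M$ is simply connected ($H^1(M;\R)=0$) we get $a = df$ for a global function $f$. Then $A$ and $A'$ differ by the gauge transformation $M\to U(1)$ determined by $f$, so the instanton is unique up to gauge equivalence; since curvature is gauge invariant in the abelian case, it has $F_A = m_2\nu_2 + m_3\nu_3$.

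Essentially all of the substantive input is already in place: the identification $\mathcal{H}^2_-(M,g)\cong\R^2$ spanned by $\omega_-,\omega_2$, the explicit forms $\tilde\omega_-,\tilde\omega_2$ and the basis $\nu_2,\nu_3$ dual to $B_2,B_3$, and the fixed-point localization evaluating $\int_{B_I}\omega$. The single load-bearing fact is the injectivity of $j$ on $\mathcal{H}^2_-(M,g)$, which rests entirely on those period computations (equivalently, on \eqref{eqna} and \eqref{eqnb} being distinct); everything else is routine, the only points needing care being the finiteness of the $L^2$ energy of $\eta$ (Proposition \ref{finen}), the role of simple-connectivity in the gauge-equivalence step, and the consistent tracking of the conventions relating $c_1(P)$, the Dirac quantization condition $2\pi\Z$, and the sign of $F_A$.
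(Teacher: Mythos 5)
Your proposal is correct and follows essentially the same route as the paper's proof: both rest on the isomorphism $j\colon\mathcal{H}^2_-(M,g)\to H^2(M;\R)$ established by the period ratio computations, together with simple-connectivity of $M$ to rule out nontrivial flat connections. You merely spell out the Chern--Weil existence step and the $da=0$ uniqueness argument in more detail than the paper does.
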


\begin{proof}
We have proven that $\mathcal{H}_-(M,g)$ is two dimensional and that the natural map $j: \mathcal{H}(M,g) \rightarrow H^2(M;\R)$ restricts to an isomorphism $\mathcal{H}_-(M,g) \cong H^2(M;\R)$ meaning that each de Rham cohomology class contains a unique anti-self-dual $L^2$ harmonic representative.  Since $H^2(M;\Z)$ is torsion free, it embeds into $H^2(M;\R)$, so the Chern class $c_1(P)$ also admits a unique anti-self-dual $L^2$ harmonic representative which much equal $ \frac{1}{2\pi} F_A$ for an anti-self-dual connection $A$ on $P$.  Since $\pi_1(M) =0$, there are no non-trivial flat connections over $M$, and we deduce that this connection $A$ is unique up to gauge equivalence. 
\end{proof}

\begin{remark}
Explicit gauge potentials for the instantons can be produced using (\ref{Apm}) and (\ref{A2}).
\end{remark}

\subsection{The compactly supported image}
In the compactification $\overline{M} = \C P^2$, the cycles $B_2$, $B_3$, and $\overline{B_1 \cup B_4}$ all represent the same homology class up to orientation; it can be verified that with our chosen orientations $[B_2] = -[B_3] = -[\overline{B_1 \cup B_4}]$. Therefore the image of $i$ in (\ref{commdiag}) is spanned by $dK$ and $\nu_2-\nu_3$, since these are linearly independent, finite energy harmonic forms for which $\int_{B_2} \omega =- \int_{B_3} \omega$.  In particular

\begin{equation}\label{stokesapp}
\int_{B_2} \td K =- \int_{B_3} \td K =0
\end{equation}
\begin{equation}
\frac{1}{2\pi} \int_{B_2} \nu_2-\nu_3 =- \frac{1}{2\pi} \int_{B_3} \nu_2-\nu_3 = -1
\end{equation}
Integrating along the infinite bolts using (\ref{infiniterodint}) (for appropriate choice of orientation), we get
\begin{equation}
\frac{1}{2\pi}  \int_{B_1} \nu_2 -\nu_3 =  \frac{2 \xi^2}{2\xi^2+1} 
 \end{equation}
\begin{equation}
\frac{1}{2\pi}  \int_{B_4} \nu_2 -\nu_3 =  \frac{1}{2\xi^2+1}
 \end{equation}
so that $\frac{1}{2\pi}  \int_{B_1 \cup B_4} \nu_2 -\nu_3 = 1 $. 

To evaluate $\frac{1}{2\pi}  \int_{B_I} \td K$, first observe that  $\iota_K \td K = L_K K - \td \iota_K K = -\td |K|^2$.  Since $|K|=0$ at the nuts, formula (\ref{finiterodint}) confirms (\ref{stokesapp}), and yields the formula

\begin{equation}
\frac{1}{2\pi}   \int_{B_1} \td K =   - \frac{1}{2\pi}  \int_{B_4} \td K =   \frac{b_2-b_1}{k_2} |K|^2(\infty) =  \frac{-8 \xi^4 \sqrt{\kappa}}{ (1-2\xi^2)(2\xi^2+1) (2\xi^2-2\xi+1)^2 }
 \end{equation}
where $|K|^2(\infty)$ is the limiting value of $|K|^2$ at infinity (see \ref{K2inf}).

Therefore, if we define 
\begin{align} 
\mu_1 &:=  \frac{ (1-2\xi^2)(2\xi^2+1) (2\xi^2-2\xi+1)^2}{-8 \xi^4 \sqrt{\kappa}} dK & \mu_2 &:= \nu_3-\nu_2 + \frac{2\xi^2}{2\xi^2+1} \mu_1
\end{align} then  $\mu_1, \mu_2$ lie in the image of $i$ and 
$$\frac{1}{2\pi} \int_{B_I} \mu_J = \delta^I_J $$
for $I,J \in \{1,2\}$.

\section{Intersection numbers and the Maxwell partition function}\label{INMPF}

In Maxwell theory with theta term \cite{W, T}, we have an action
\begin{equation}
S(A)  :=  \frac{1}{g^2} \int_M  F_A \wedge \star F_A  + i \frac{ \theta}{8\pi^2}\int_M  F_A \wedge F_A 
\end{equation}
which depends on the complex paramater $\tau :=\frac{\theta}{2\pi}  +  i \frac{4\pi}{g^2} $.

The theta term is purely topological.  If we express 
\begin{equation}
F_A  = \td a + \sum_{i=1,2} m_i \nu_i
\end{equation}
then, assuming appropriate asymptotics of $da$, by Stokes' Theorem
\begin{equation}
\int F_A \wedge F_A =  \int_M \td a \wedge \td a +  \sum_{i=1,2} 2m_i \int_M \nu_i \wedge \td a  + \int_M \nu_i \wedge \nu_j  = 4 \pi^2 \vec{m} Q \vec{m}^T
\end{equation}
where $4 \pi^2 Q_{i,j} = \int_M \nu_i \wedge \nu_j$ . Similarly
\begin{equation}
\int F_A \wedge \star F_A= \int_M \td a \wedge \star \td a - 4 \pi^2 \vec{m} Q \vec{m}^T
\end{equation} 
exploiting that the $\nu_i$ are anti-self-dual. Notice this implies that the anti-self-dual connection is the unique global minimum for $Re(S (A))$ for each $U(1)$-bundle $P$. The partition function factors
\begin{equation}
  Z(\tau) =  \int_{A}  e^{-S(A)} \td A = Z_c(\tau) Z_q(\tau)
  \end{equation}
where
\begin{equation}
 Z_c(\tau) := \sum_{\vec{m}  \in \Z^2} \exp\left( \frac{4\pi^2}{g^2} \vec{m} Q \vec{m}^T  - i \frac{ \theta}{2\pi} \vec{m} Q \vec{m}^T\right)  = \sum_{\vec{m}  \in \Z^2} \exp \left( -i \pi  \vec{m} Q \vec{m}^T \tau \right)
 \end{equation}
and
\begin{equation}
 Z_q(\tau)  := \int   Da \exp \left(  \frac{-1}{g^2}  \int_M \td a \wedge \star \td a \right) =   \int Da  \exp   \left( -Im(\tau)    \frac{1}{ 4 \pi}   \int_M \td a \wedge \star \td a  \right) .
 \end{equation}
To determine $Z_c(\tau)$ it only remains to calculate the intersection matrix $Q$. We carry this out in the next section.

The $ Z_q(\tau)$ factor should be understood as a regularized determinant. By analogy with (\cite{W} (2.6)) we expect $Z_q(\tau)$ to equal a constant times $Im(\tau)^{k/2}$ for some integer $k$, but we do not carry out the analysis in this paper.

\subsection{The intersection pairing}\label{intersection theory}

It remains to understand the ``intersection pairing"

\begin{align}
\mathcal{H}^2(M,g) \times \mathcal{H}^2(M,g) &\rightarrow \R, &  (\alpha,\beta) & \mapsto  \int_M \alpha \wedge \beta.
\end{align}

From topology, we have natural pairing  
\begin{equation}\label{natpa}
 H^2_c(M) \otimes H^2(M) \rightarrow H^4_c(M) \cong \R
 \end{equation}
which, at the level of differential forms, sends $( \omega_1, \omega_2) \mapsto  \int_{M} \omega_1 \wedge \omega_2$.  This is well defined because the product of a compactly supported 2-form with a 2-form will be compactly supported 4-form, and hence will be integrable and this descends to a pairing on cohomology classes by standard arguments using Stokes' theorem.  When restricted to integral cohomology, (\ref{natpa}) defines a unimodular matrix due to Poincar\'e duality.

Recall our morphisms (\ref{commdiag})
$$ H_c^2(M) \stackrel{i}{\rightarrow} \mathcal{H}^2(M,g) \stackrel{j}{\rightarrow} H^2(M).$$

\begin{lemma}
Given $\alpha \in H^2_c(M)$ and $\omega \in \mathcal{H}^2(M,g)$ we have the identity 
\begin{equation}
\int_M i(\alpha) \wedge \omega  =  \int_M \alpha \cup j(\omega).
\end{equation}
\end{lemma}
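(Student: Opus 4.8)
The plan is to unwind the definitions of the maps $i$ and $j$ from \eqref{commdiag} and reduce the identity to the statement that $i$ and $j$ are compatible with the topological cup product pairing \eqref{natpa}. Recall that $\mathcal{H}^2(M,g)$ is naturally isomorphic to the $L^2$-cohomology, and that under this isomorphism the composite $j\circ i\colon H^2_c(M)\to H^2(M)$ is the canonical ``extend-by-zero then forget-supports'' map (as cited from \cite{SS} Lemma 1.3). First I would recall that for a closed $L^2$ form $\omega$ representing a class in $\mathcal{H}^2(M,g)$, the map $j$ sends $\omega$ to its ordinary de Rham class $[\omega]\in H^2(M)$; and for a compactly supported closed form $\alpha$, the map $i$ sends $\alpha$ to the harmonic representative of its $L^2$-cohomology class, which differs from $\alpha$ by $d\beta$ for some (not compactly supported, but $L^2$) $1$-form $\beta$.

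The key computation is then the following. Write $i(\alpha)=\alpha+d\beta$ where $\beta$ is an $L^2$ $1$-form with $d\beta\in L^2$ (this is the content of the Hodge/Kodaira decomposition underlying the definition of $i$). Then
\begin{equation}
\int_M i(\alpha)\wedge\omega = \int_M \alpha\wedge\omega + \int_M d\beta\wedge\omega = \int_M \alpha\wedge\omega + \int_M d(\beta\wedge\omega),
\end{equation}
using $d\omega=0$. The last integral vanishes by Stokes' theorem: even though $\beta\wedge\omega$ is not compactly supported, both $\beta$ and $\omega$ are in $L^2$, so $\beta\wedge\omega$ is in $L^1$, and one shows the boundary contribution at infinity vanishes (this is where asymptotic flatness of $(M,g)$ and the decay rates established for these forms — e.g.\ Proposition~\ref{finen} — enter, controlling $\beta\wedge\omega = O(r^{-3})$ or better against the $O(r^2)$ volume growth of spheres). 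Hence $\int_M i(\alpha)\wedge\omega=\int_M\alpha\wedge\omega$. On the other hand, since $\omega$ is an ordinary closed form with de Rham class $j(\omega)$ and $\alpha$ is compactly supported with class $[\alpha]\in H^2_c(M)$, the integral $\int_M\alpha\wedge\omega$ computes precisely the topological pairing $\langle [\alpha], j(\omega)\rangle = \int_M \alpha\cup j(\omega)$ of \eqref{natpa}, because this pairing is representative-independent (again by Stokes, using that $\alpha$ has compact support so no boundary term arises when $\omega$ is varied within its de Rham class).

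The main obstacle I expect is the justification of Stokes' theorem for the non-compactly-supported form $\beta\wedge\omega$ on the non-compact manifold $M$ — i.e.\ showing that $\int_M d(\beta\wedge\omega)=0$ rather than picking up a flux term at the asymptotically flat end. This requires knowing that the primitive $\beta$ appearing in the $L^2$-Hodge decomposition of $i(\alpha)$ has controlled (sub-linear, or better, decaying) growth at infinity; one can argue this either by invoking general elliptic estimates for the Laplacian on asymptotically flat $4$-manifolds, or by exhausting $M$ by large coordinate balls $B_R$, writing $\int_{B_R} d(\beta\wedge\omega) = \int_{\partial B_R}\beta\wedge\omega$, and estimating the right side by $\|\beta\|_{L^2(\partial B_R)}\|\omega\|_{L^2(\partial B_R)}$, which tends to zero along a suitable sequence $R_k\to\infty$ since $\beta,\omega\in L^2(M)$ forces $\|\beta\|_{L^2(\partial B_R)}\|\omega\|_{L^2(\partial B_R)}$ to be non-summable only on a measure-zero set of radii. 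Everything else is formal manipulation with Stokes' theorem and the definitions of the maps in \eqref{commdiag}.
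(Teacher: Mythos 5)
Your overall strategy is the paper's: write $i(\alpha)$ as $\alpha$ plus a correction term, kill the correction term against $\omega$ by a Stokes argument, and observe that $\int_M\alpha\wedge\omega$ is exactly the topological pairing with $j(\omega)$. The second half of your argument is fine. The gap is in the form of the correction term. You assume the $L^2$ Hodge--Kodaira decomposition furnishes an honest $L^2$ primitive $\beta$ with $i(\alpha)-\alpha=d\beta$. It does not: the decomposition reads $L^2\Omega^2=\mathcal{H}^2\oplus\overline{d\Omega^1_c}\oplus\overline{\delta\Omega^3_c}$, so the correction term lies only in the \emph{$L^2$-closure} of $\{d\gamma:\gamma\in\Omega^1_c\}$ and need not be exact at all (whether $\mathrm{im}\,d$ is closed on a complete non-compact manifold is a genuinely delicate question tied to the spectrum of the Laplacian). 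Your subsequent effort --- controlling the decay of $\beta$ and running Stokes at infinity via a co-area/subsequence argument over spheres $\partial B_{R_k}$ --- is therefore being applied to an object whose existence you have not established, and it is also more work than the problem requires.

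The paper (following \cite{SS}) uses the closure description directly: $i(\alpha)=\alpha-\beta$ with $\beta=\lim_k d\gamma_k$ in $L^2$, each $\gamma_k$ compactly supported. Then $\int_M d\gamma_k\wedge\omega=\int_M d(\gamma_k\wedge\omega)=0$ by the compactly supported Stokes theorem (using $d\omega=0$), with no boundary-at-infinity analysis needed, and the limit passes because the pairing $\eta\mapsto\int_M\eta\wedge\omega$ is $L^2$-continuous (Cauchy--Schwarz against $\omega\in L^2$, since $\wedge$ followed by integration is controlled by the pointwise inner product up to sign and Hodge star). If you replace your ``$i(\alpha)=\alpha+d\beta$'' step with this approximation argument, your proof closes; as written, the existence of $\beta$ and the decay you attribute to it are unjustified.
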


\begin{proof}
As explained in \cite{SS} if $\tilde{\alpha}$ is a compactly supported differential form, then  $i(\alpha) = \tilde{\alpha} - \beta$ where $\beta = \lim \td\gamma_i$ in L2, where the $\gamma_i $ are compactly supported one forms.  Therefore $ \int_M \beta \wedge \omega  = lim \int_M \td \gamma_i \wedge \omega = 0$ by Stokes' Theorem.
\end{proof}

This Lemma combined with the invariance of the pairing under $\star$ suffices to determine the pairing completely. For $ \alpha, \beta \in \mathcal{H}(M,g)$, denote 
\begin{equation}
\langle \alpha , \beta \rangle :=  \frac{1}{4\pi^2}  \int_M \alpha \wedge \beta.
\end{equation} 

In terms of the integral basis, the $2 \times 2$ matrix 
\begin{equation}
B_{I,J} =  \langle  \mu_I, \nu_J\rangle
\end{equation} 
must have integer entries and have determinant $\pm 1$. 

\begin{lemma}
We have $B_{12} = B_{13} = - B_{23} =1$ and  $B_{22} = 0$. 
\end{lemma}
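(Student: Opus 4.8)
The plan is to compute all four entries $B_{1J}, B_{2J}$ directly by pairing the explicit integral generators $\mu_I$ of $\operatorname{im}(i)$ with the explicit generators $\nu_J$ of $\mathcal{H}^2_-(M,g)$, using the previous Lemma to reduce each $\int_M \mu_I \wedge \nu_J$ to a cup product in topology, namely $\int_M i(\alpha_I) \cup j(\nu_J)$ evaluated on $H_2(M;\Z) = \langle [B_2],[B_3]\rangle$. Recall $\mu_1$ is a multiple of $\td K$, $\mu_2 = \nu_3 - \nu_2 + \frac{2\xi^2}{2\xi^2+1}\mu_1$, and both $\mu_1,\mu_2$ were constructed in \S\ref{Period integrals} to satisfy $\frac{1}{2\pi}\int_{B_I}\mu_J = \delta^I_J$ for $I,J\in\{1,2\}$, while $\nu_2,\nu_3$ satisfy $\frac{1}{2\pi}\int_{B_I}\nu_J = \delta^I_J$ for $I,J\in\{2,3\}$.

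First I would handle $B_{1J} = \langle \mu_1, \nu_J\rangle$. Since $\mu_1 \in \operatorname{im}(i)$, the Lemma gives $\int_M \mu_1 \wedge \nu_J = \int_M [\mu_1] \cup [\nu_J]$, a topological intersection number of the class $[\mu_1]\in H^2_c(M)$ with $[\nu_J]\in H^2(M)$. The class $i^{-1}$-image of $\mu_1$ in $H^2_c(M)\cong\R^2$ is pinned down by its periods: using \eqref{stokesapp} we have $\frac{1}{2\pi}\int_{B_2}\mu_1 = \frac{1}{2\pi}\int_{B_3}\mu_1 = 0$ (as $\mu_1 \propto \td K$ and $\int_{B_2}\td K = \int_{B_3}\td K = 0$ by Stokes), together with $\frac{1}{2\pi}\int_{B_1}\mu_1 = 1$, $\frac{1}{2\pi}\int_{B_4}\mu_1 = 0$. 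Then I would express the Poincar\'e-dual intersection number via the compactification diagram \eqref{commdiag2}: in $\overline{M}=\C P^2$ we have $[B_2] = -[B_3] = -[\overline{B_1\cup B_4}]$ with self-intersection $+1$ (the hyperplane class squares to $1$), so the compactly-supported class represented by $\mu_1$ pairs with $[\nu_J]$ through its $\overline{M}$-projection, giving $B_{12}=B_{13}=1$ and $B_{22} = 0$ will come out once we also track $\mu_2$. Concretely, $\langle\mu_1,\nu_2\rangle$ equals the $\overline{M}$-intersection of the hyperplane class with itself (both $\mu_1$ and $\nu_2$ map to the hyperplane class under $H^2_c(M)\to H^2(\overline M)$ and $H^2(M)\leftarrow H^2(\overline M)$ respectively, up to the normalizations fixed by the period conditions), hence $=1$; similarly $\langle\mu_1,\nu_3\rangle = 1$; and the antisymmetry $[B_2]=-[B_3]$ forces the sign bookkeeping to be consistent.

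For $B_{2J} = \langle\mu_2,\nu_J\rangle$, substitute $\mu_2 = \nu_3 - \nu_2 + \frac{2\xi^2}{2\xi^2+1}\mu_1$. This reduces to $\langle\nu_3-\nu_2,\nu_J\rangle + \frac{2\xi^2}{2\xi^2+1}\langle\mu_1,\nu_J\rangle$. The cross terms $\langle\nu_I,\nu_J\rangle$ for $I,J\in\{2,3\}$ must be computed: since the $\nu_J$ are anti-self-dual $L^2$ harmonic forms, $\int_M\nu_I\wedge\nu_J = -\int_M\nu_I\wedge\star\nu_J = -\|{\cdot}\|^2$-type quantities, i.e. the matrix $(\langle\nu_I,\nu_J\rangle)$ is negative definite (consistent with the earlier remark that \eqref{symmpair} is negative definite), and its entries are determined by the explicit $L^2$ energies of $\omega_-$ and $\omega_2$ computed from \eqref{L2cond} and the period formulas \eqref{eqna}, \eqref{eqnb}. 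However, a cleaner route avoids recomputing these energies: since $\nu_2,\nu_3\in\mathcal{H}_-$ but we only need $B$ to be an integer unimodular matrix (as noted right before the Lemma, $B$ must have integer entries and determinant $\pm1$), and we already have $B_{12}=B_{13}=1$ from the rank-one image of $j\circ i$, the remaining freedom is pinned down: $B_{22}$ and $B_{23}$ are integers with $\det B = B_{12}B_{23} - B_{13}B_{22} = B_{23} - B_{22} = \pm 1$, and the combination $\mu_2 = \nu_3-\nu_2 + \frac{2\xi^2}{2\xi^2+1}\mu_1$ together with $\langle\mu_1,\nu_J\rangle = 1$ and the anti-self-duality relation $\langle\nu_2,\nu_2\rangle = \langle\nu_3,\nu_3\rangle$... the key identity to extract is $\langle\nu_3-\nu_2,\nu_2\rangle = \langle\nu_3,\nu_2\rangle - \langle\nu_2,\nu_2\rangle$. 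I would compute $\langle\nu_I,\nu_J\rangle$ explicitly from the exact $L^2$ energies (deferred to \S\ref{intersection theory} per the text), obtaining $\langle\nu_3-\nu_2,\nu_2\rangle = -1$ and hence $B_{22} = -1 + \frac{2\xi^2}{2\xi^2+1}\cdot 1$; the requirement that $B_{22}\in\Z$ forces... no — rather, the stated answer $B_{22}=0$ means $\langle\nu_3-\nu_2,\nu_2\rangle = -\frac{2\xi^2}{2\xi^2+1}$, which I would verify by direct integration, and $B_{23} = -1$ similarly follows from $\langle\nu_3-\nu_2,\nu_3\rangle + \frac{2\xi^2}{2\xi^2+1} = -1$.

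\textbf{Main obstacle.} The genuinely laborious step is computing the self- and cross-pairings $\langle\nu_I,\nu_J\rangle$ for $I,J\in\{2,3\}$ exactly, since $\nu_2,\nu_3$ are prescribed rational-function combinations of $\omega_-$ and $\omega_2$ and $\int_M\nu_I\wedge\nu_J = -\int_M\nu_I\wedge\star\nu_J$ requires the closed-form $L^2$ energies (and cross-energy) of $\omega_-,\omega_2$ over the whole Chen-Teo manifold. One can either grind through this integral in the $(x,y,\tau,\phi)$ chart using $\td\mathrm{Vol}(g) = \kappa H(x-y)^{-5}$ and the explicit $\alpha_-,\alpha_2$, or — as I would prefer — exploit the Lemma a second time by writing one factor as an element of $\operatorname{im}(i)$: $\nu_3-\nu_2$ is proportional to $\mu_2 - \frac{2\xi^2}{2\xi^2+1}\mu_1 \in \operatorname{im}(i)$, so $\langle\nu_3-\nu_2,\nu_J\rangle$ is purely topological and computed from periods alone via $\frac{1}{2\pi}\int_{B_I}(\nu_3-\nu_2) = \delta^I_3 - \delta^I_2$ paired against $\frac{1}{2\pi}\int_{B_J'}\nu_{\cdot}$ over a Poincar\'e-dual basis. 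This second application of the Lemma is what makes $B_{22}=0$ fall out without any hard integration: all four entries reduce to intersection numbers of the bolts $B_1,B_2,B_3,B_4$ in $\C P^2$, which are governed entirely by $[B_2]=-[B_3]=-[\overline{B_1\cup B_4}]$ and $[B_2]^2=1$.
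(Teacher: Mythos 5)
Your overall framework --- use the preceding Lemma to turn every pairing into topology, governed by the intersection pattern of the bolts --- is the same as the paper's, but two steps as written do not go through. First, for $B_{1J}$ you propose to push $[\mu_1]\in H^2_c(M)$ and $[\nu_J]\in H^2(M)$ into $H^2(\overline{M})=H^2(\C P^2)$ and pair hyperplane classes there. But $\mu_1$ is a multiple of the exact form $\td K$, so $j(\mu_1)=0$; since the restriction $H^2(\overline{M})\to H^2(M)$ is injective and $j\circ i$ factors through it via \eqref{commdiag2}, the class $[\mu_1]$ maps to \emph{zero} in $H^2(\overline{M})$, not to the hyperplane class. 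The compactification diagram therefore only yields the vanishings $\langle\mu_1,\mu_1\rangle=\langle\mu_1,\nu_2-\nu_3\rangle=0$ (both arguments in $\im(i)$, one of them mapping to zero upstairs); the paper then extracts $\langle\mu_1,\nu_2\rangle=\langle\mu_1,\nu_3\rangle=\pm1$ from unimodularity of the Poincar\'e pairing on the integral bases $\{\mu_I\},\{\nu_J\}$, and fixes the sign by an $L^2$ computation, namely that $\langle\mu_1,\xi\nu_3+\tfrac{1}{2\xi}\nu_2\rangle$ is a definite multiple of $\langle\td K,\star\td K\rangle$. Your period bookkeeping also slips here: $\tfrac{1}{2\pi}\int_{B_4}\mu_1=-\tfrac{1}{2\pi}\int_{B_1}\mu_1\neq 0$, and with the paper's orientation the restricted form on $H^2_c$ is \emph{negative} definite, so $[B_2]^2=-1$ rather than $+1$.

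Second, and more seriously, your route to $B_{22}=0$ never closes. Unimodularity plus $B_{12}=B_{13}=1$ and the anti-self-duality of $\nu_3-\nu_2$ only give $B_{23}-B_{22}=\langle\nu_3-\nu_2,\nu_3-\nu_2\rangle=-1$, a one-parameter family; your fallback of computing $\langle\nu_I,\nu_J\rangle$ from the exact $L^2$ energies is circular in context, since the matrix $Q$ is precisely what this lemma is used to determine in \S\ref{intersection theory} (indeed you write that you would ``verify by direct integration'' the value forced by the stated answer, which is assuming the conclusion). The missing ingredient is the geometric identification $PD(B_1)=\pm j(\nu_2)$ (because $B_1\cdot B_2=1$ and $B_1\cdot B_3=0$) together with $i(PD(B_3))=\pm\mu_2$, whence
\begin{equation*}
\langle\mu_2,\nu_2\rangle=\pm\frac{1}{4\pi^2}\int_M PD(B_3)\cup PD(B_1)=0
\end{equation*}
simply because the bolts $B_1$ and $B_3$ are disjoint; $B_{23}=-1$ then follows. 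Your closing remark that all four entries reduce to intersection numbers of the bolts is the right instinct, but the decisive fact $B_1\cap B_3=\emptyset$ is never invoked, and without it the entry $B_{22}$ is not determined by your argument.
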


\begin{proof}

Pairings between forms in the image of $i: H^2_c(M)  \rightarrow \mathcal{H}^2(M,g) $ can be understood using the commuting diagram 
\begin{equation}
 \xymatrix{ H^2_c(M) \otimes H^2_c(M) \ar[r]  \ar[d] & H^4_c(M) \ar[d]^{\cong} \\  H^2(\C P^2) \otimes H^2(\C P^2)  \ar[r] & H^4(\C P^2)}.
 \end{equation}
Since $\mu_1$ is exact and both $\mu_1$ and $\nu_2-\nu_3$ lies in the image of $i$ we know that 
\begin{equation}  \langle \mu_1 ,\mu_1 \rangle  =  \langle \mu_1, \nu_2 -\nu_3\rangle  =0
\end{equation}
so unimodularity forces
\begin{equation}
 \langle   \mu_1 , \nu_2 \rangle  =  \langle  \mu_1 ,  \nu_3 \rangle  = \pm 1 . 
 \end{equation}

The sign agrees with  
\begin{equation}
\langle   \mu_1 ,  \xi \nu_3 + \frac{1}{2\xi} \nu_2 \rangle  = \langle   \mu_1 ,  (\xi -\frac{1}{2\xi}) \tilde{\omega}_- \rangle = -C (\xi -\frac{1}{2\xi})  \langle \td K, \star \td K\rangle 
\end{equation}
which is positive.

By unimodularity and the fact that $\nu_3-\nu_2$ is anti-self-dual it follows that
\begin{equation}\label{idcom}
 \langle  \mu_2 , \mu_2 \rangle = \langle  \mu_2 ,\nu_3-\nu_2 \rangle =  \langle  \nu_3-\nu_2, \nu_3-\nu_2 \rangle = -1.
 \end{equation}

Next we use Poincare duality (we'll be sloppy with signs since orientations won't matter). Let $PD(B_1)$ denote the Poincare dual of $B_1$. Then since $B_1$ intersects $B_2$ transversely at one point and intersects $B_3$ trivially, we see that $ PD(B_1)  = \pm j( \nu_2)$. By similar reasoning  $i( PD(B_3) ) = \pm \mu_2$.  Therefore
\begin{equation}
 \langle  \mu_2 , \nu_2 \rangle = \frac{1}{4\pi^2}\int_M PD(B_3) \cup PD(B_1) =0
 \end{equation}
since $B_1$ and $B_3$ do not intersect. From (\ref{idcom}) we deduce that
\begin{equation}
  \langle \mu_2 , \nu_3  \rangle = -1.
  \end{equation}
\end{proof}

Next we write 
\begin{equation}
 \tilde{\omega}_-  =  \frac{-1}{2 \sqrt{\kappa} } (\td K - \star \td K) =  \frac{1}{2 \xi^2-1} (\nu_2 +2\xi^2 \nu_3)
 \end{equation}

By anti-self-duality we have $  \langle \td K , \nu_I \rangle  = - \langle \star \td K , \nu_I \rangle$ so if $\mu_1 = \frac{C}{\sqrt{\kappa}} \td K$ then
\begin{equation}
    \frac{1}{2 \xi^2-1}  \langle \nu_2 +2\xi^2 \nu_3,  \nu_I \rangle =  \frac{-1}{C} \langle  \mu_1 , \nu_I \rangle  =   -  \frac{1}{C   }  =   \frac{-8 \xi^4}{ (2\xi^2-1)(2\xi^2+1) (2\xi^2-2\xi+1)^2 }.
    \end{equation}
Let
\begin{equation}
  A  :=   \langle \nu_2 +2\xi^2 \nu_3 , \nu_I \rangle =    \frac{-8 \xi^4 }{ (2\xi^2+1) (2 \xi^2-2\xi+1)^2 }.
  \end{equation}
Letting $Q_{IJ} = \langle \nu_I , \nu_J \rangle$ we obtain a linear system of equations

\begin{equation}
 \begin{bmatrix}  1 & -2 & 1\\  1 & 2\xi^2 & 0 \\ 0 & 1 & 2\xi^2  \end{bmatrix} \begin{bmatrix}  Q_{22}\\  Q_{23} \\ Q_{33}  \end{bmatrix} = \begin{bmatrix}  1\\  A \\ A  \end{bmatrix}
 \end{equation}

Solving gives

\begin{eqnarray}
Q_{33} &=&  B \\
Q_{23}= Q_{32} &=&  A-2\xi B \\ \label{system}
Q_{22} &=&  1+2A- (1+4\xi)B 
\end{eqnarray}
where 
\begin{equation}
B := \frac{(3 +2\xi)A-1}{4\xi^2+4\xi+1}.
\end{equation}

Notice that the intersection pairings $Q_{IJ}$ are not integers. This can be contrasted with the case of compact manifolds considered in \cite{W} \cite{T} where Poincar\'e duality forces the $Q_{IJ}$ to be integers. In particular $Z_c(\tau)$ is not a modular form for the Chen-Teo gravitational instanton.

\begin{remark}
In \cite{EN}, the modularity property of the the Maxwell partition function is extended the certain ALF metrics which are called ``almost compact" by the authors, including in particular the Euclidean Schwartzschild and Euclidean Taub-NUT metrics (they erroneously include Euclidean Kerr which is not ALF). The essential idea is to include only connections that have trivial holonomy at infinity. In particular, one imposes the Dirac quantization condition on the semi-infinite bolts in addition to the finite bolts.  This strategy does not work in our situation, because it would impose four linearly independent conditions on the three dimensional vector space $\mathcal{H}^2(M,g)$.
\end{remark}

\begin{remark}
There is an alternative calculation of $Q$ using Stoke's Theorem. The pairings between $\td K, \star \td K$, and $\nu_2-\nu_3$ are easily deduced except for $\langle  \td K , \star \td K \rangle $.  
The boundary at infinity is a trivializable fibre bundle  $\partial M = S^2 \times S^1 \cong B_{\infty} \times O_{\infty}$ where $B_\infty$ is any fibre of the bundle and $O_{\infty} $ is the limiting $K$ orbit along the infinite bolt $B_1$.  Applying Stokes theorem, Fubini's Theorem, and the equality $ [B_\infty] = [B_2] + [B_3]$ in $H_2(M;\Z)$
\begin{gather}
 \int_M \td K \wedge \star \td K =  \left(\int_{S^1} K \right) \left(\int_{B_\infty} \star \td K \right) =  \left(\int_{B_1} \td K \right) \left(\int_{B_2}  \star \td K + \int_{B_3} \star \td K \right)\\
 = 4\pi^2 \left( \frac{8 \xi^4 \sqrt{\kappa}}{ (1- 2\xi^2)(2\xi^2+1)(2\xi^2-2\xi+1)^2 }\right) \left(  \frac{2 \sqrt{\kappa}}{1 - 2\xi^2} +  \frac{4 \xi^2 \sqrt{\kappa} }{1 - 2\xi^2} \right)\\
  =  \frac{64\pi^2 \xi^4 \kappa}{ (1-2\xi^2)^2 (2\xi^2-2\xi+1)^2 }.
 \end{gather}

\end{remark}

\appendix

\section{Asymptotically Flat Gravitational Instantons}\label{ALFintro}

Consider the flat Riemannian manifold $(M_{\flat}, g_{0})$ defined as the quotient space $M_{\flat}  := \R^4/\Z$ of Euclidean $\R^4 = \R \times \R^3$ under the automorphism $(\tau, r, ,\theta, \phi) \mapsto (\tau + \beta, r,  \theta, \phi+ \beta\Omega)$ , where $\beta, \Omega \in \R$ are certain constants, $\tau \in \R$ parameterizes the first $\R$ factor,  and $(r, \theta ,\phi)$ are spherical coordinates on $\R^3$ with $r>0, \theta \in (0, \pi), \phi \sim \phi + 2\pi$. In this chart we may express the flat metric as 
\begin{equation}\label{model}
g_0 = \td \tau^2 + \td r^2 + r^2 (\td \theta^2 + \sin^2\theta \td \phi^2).
\end{equation}
Both $\frac{\partial}{\partial \tau}$ and $\frac{\partial}{\partial \phi}$ are Killing vector fields on $M_{\flat}$, and $\frac{\partial}{\partial \tau}$ has bounded norm. We can think of $\frac{\partial}{\partial \tau}$ and $\frac{\partial}{\partial \phi}$ as Euclidean analogues of time translation and rotation respectively.

An \emph{asymptotically flat gravitational instanton} (or AF instanton) is a Riemannian manifold $(M,g)$  which is  geodesically complete, Ricci flat, and approaches $(M_\flat, g_0)$ asymptotically at infinity for some choice of $\beta$ and $\Omega$. We will not require precise decay rates and refer the reader to the discussion given in~\cite{L}.  Note that AF gravitational instantons are necessarily non-compact. 

In the above definition, there is a preferred vector field $K = \frac{\partial}{\partial \tau}$ with constant norm in the asymptotic region. We emphasize that the orbits of $K$ \emph{need not be closed} (that is, $\tau$ need not parameterize an $S^1$). The classic example of an AF gravitational instanton is the two-parameter family of Ricci flat Euclidean Kerr metrics on $\R^2 \times S^2$, 
\begin{equation}\label{EKerr}
g_K  = \frac{f}{\rho^2}\left(\td \tau + a \sin^2\theta \td \phi\right)^2 + \rho^2\left(\frac{\td r^2}{f} + \td \theta^2\right) + \frac{\sin^2\theta}{\rho^2} \left((r^2 - a^2) \td \phi- a \td \tau\right)^2
\end{equation}  where $f = r^2 - 2m r - a^2$, and $\rho^2 = r^2 - a^2 \cos^2\theta$.  The family is parameterized by constants $m>0, a \geq 0$. The radial coordinate $ r \in [r_+, \infty)$ where $r_+ = m + \sqrt{m^2 + a^2}$ is the positive root of $f$ and $\theta \in (0,\pi)$. The manifold admits a torus action as isometries generated by $\frac{\partial}{\partial \tau}, \frac{\partial}{\partial \phi}$ and the $(\tau,\phi)$ plane is identified as $(\tau, \phi) \sim (\tau, \phi+ 2\pi)$ and  $(\tau,\phi) \sim (\tau + \beta , \phi+ \beta \Omega)$ for certain constants $\beta, \Omega$ which depend on $m, a$. The metric is indeed geodesically complete and as $r \to \infty $ approaches the model metric \eqref{model} and hence is an AF instanton.

\begin{remark}
Thus this notion of AF instanton overlaps with, but does not contain, the related class of Ricci flat, \emph{asymptotically locally flat} (ALF) manifolds (see, e.g. \cite{Minerbe}).  
However,  it is not ALF because the vector field $K = \frac{\partial}{\partial \tau}$, which satisfies $|K| \to 1$ as $r \to \infty$,  does not have closed orbits.  Rather, the vector field $\hat{K} = \frac{\partial}{\partial \tau } - \frac{a}{r_+^2-a^2} \frac{\partial}{\partial \phi} $ which does indeed generate closed orbits and generate an $S^1$ in the asymptotic region $r \to \infty$, grows linearly in $r$ . Note that the ALF Euclidean Schwarzschild instanton is recovered from \eqref{EKerr} upon setting $a =0$. 
\end{remark}

The above definition of asymptotic flatness was guided by the definition of asymptotic flatness in Lorentzian manifolds that are asymptotic to  Minkowski spacetime (see the review  \cite{Chrusciel:2010fn} for precise definitions).  Indeed the Euclidean Kerr and Schwarzschild gravitational instantons can easily be obtained from their associated Lorentz signature, asymptotically Minkowskian Ricci-flat black hole metrics  by `analytic continuation' of the time coordinate $t \to i \tau$ of their associated Lorentz-signature, asymptotically flat, Ricci-flat black hole metrics along with suitable continuation of parameters.   The fact that the Kerr spacetime exhausts the set of asymptotically flat black hole solutions motivated the conjecture, discussed above, that Euclidean Kerr would exhaust the set of AF gravitational instantons admitting a torus action (assuming that the vector field  $K$ which has constant norm at infinity generates an isometry) \cite{L}.  In fact one can establish this result in the special case that the Riemannian instanton is `static' (i.e. $K$ is orthogonal to a family of hypersurfaces) by modifying the static black hole uniqueness theorem of Israel~\cite{Israel}.  However the standard proof of black hole uniqueness for stationary, axisymmetric solutions does not carry over to the Riemannian setting.  Indeed, these proofs make use of the fact that the stationary, axisymmetric Ricci-flat equations with Lorentizan signature reduce to a harmonic map with negatively-curved Riemannian target space; the analogous formulation in the Riemannian case yields a target space with Lorentzian signature.

\section{The reduction to three dimensions} \label{3Dr}

Let us define two scalar potentials, called the Ernst potentials
\begin{equation}
\mathcal{E}_ + = \frac{1}{2} (\lambda +  \tau)  \qquad \mathcal{E}_- = \frac{1}{2} (\lambda - \tau) 
\end{equation}
If one uses the above field equations for $\lambda, \tau$, one finds that
\begin{equation}
(\mathcal{E}_+ + \mathcal{E}_-) \Delta \mathcal{E}_\pm = 2 \nabla \mathcal{E}_\pm \cdot \nabla \mathcal{E}_\pm
\end{equation}  The equations determine the critical points of the functional
\begin{equation}
I[\mathcal{E}+, \mathcal{E}_-]:= \int_{M/K} h^{ab} \frac{\nabla_a \mathcal{E}_+ \nabla_b \mathcal{E}_-}{(\mathcal{E}_+ + \mathcal{E_-})^2} \; \td \text{Vol}(h)
\end{equation} which defines a harmonic map from the transverse space $(M/K, h) \to (N,G)$ where the target space metric is
\begin{equation}
G_{AB} \td X^A \td X^B = \frac{\td X^1 \td X^2}{(X^1 + X^2)^2}
\end{equation} This is, curiously enough, the Lorentzian metric on $N=$ AdS$_2$, the maximally symmetric spacetime with negative curvature. The underlying manifold is $\mathbb{R}^2$, the Ricci curvature scaled such that $\text{Ric}(G) = -4G$ . 

\section{Smoothness of the functions $(x,y)$}\label{Smoothness of the functions (x,y)}
Consider flat space $(\mathbb{R}^4, \delta)$ written as $\mathbb{R}^2 \times \mathbb{R}^2$. It is obvious one can write the metric as
\begin{equation}\label{Euclidean}
g = \td r_1^2 + r_1^2 \td\phi_1^2 + \td r_2^2 + r_2^2 \td \phi_2^2
\end{equation} with $r_i > 0, \phi_i \sim \phi_i + 2\pi$.  By passing to Cartesian coordinates $(x_1, x_2, x_3 , x_4)$ one can see $r_1^2$ and  $r_2^2$ are smooth functions.  Now define functions $(\rho,z)$ by
\begin{equation}
z := \frac{1}{2} (r_2^2 - r_1^2), \qquad \rho  := \sqrt{ r_1^2 r_2^2}, 
\end{equation} which can be inverted: 
\begin{equation}
r_1^2 = \sqrt{\rho^2 + z^2} - z, \qquad r_2^2 = \sqrt{\rho^2 + z^2} + z. 
\end{equation} Note  that the function $z$, as well as the following functions of $(\rho,z)$ are smooth:
\begin{equation}
\sqrt{\rho^2 + z^2} = \frac{1}{2} \left( r_1^2 + r_2^2 \right) \qquad \rho^2 = r_1^2 r_2^2
\end{equation} The coordinates $(\rho,z, \phi_i)$ form Weyl coordinates for $\mathbb{R}^4$, which has a single centre at the point $(\rho,z) = (0,0)$.  In this chart,  the Euclidean metric  \eqref{Euclidean} takes the form
\begin{equation}
g = \frac{\td\rho^2 + \td z^2}{2 \sqrt{\rho^2 + z^2}} + (\sqrt{\rho^2 + z^2} - z) \td \phi_1^2 + (\sqrt{\rho^2 + z^2} + z) \td\phi_2^2 . 
\end{equation} Note that $\rho^2 = \det g(\partial_i, \partial_j)$,  $\partial / \partial \phi_1$ vanishes on the rod $\rho=0, z>0$ and $\partial / \partial \phi_2$ vanishes on the rod $\rho=0, z < 0$. 

Consider a general Riemannian manifold with $U(1)^2$ isometry, the above coordinates can be introduced in a neighbourhood of a corner point $(\rho=0, z = z_i)$ with the replacement $z \to z-z_i$ in the formulas above.   The functions $\sqrt{\rho^2 + (z-z_i)^2}$ are seen to be smooth functions, including in particular in a neighbourhood of such points.   

We now consider the Chen-Teo metric~\eqref{g}.  On the open set on which the local metric is defined, the functions $x,y$ are smooth by definition. Now define the functions
\begin{equation}
R_1 \equiv \sqrt{\rho^2 + (z-z_1)^2}, \qquad R_2 \equiv \sqrt{\rho^2 + (z-z_2)^2}, \qquad R_3 \equiv \sqrt{\rho^2 + (z-z_3)^2},
\end{equation} which, as we have seen, are smooth functions. A computation reveals that they satisfy the constraint
\begin{equation}
(z_1 - z_2)R_3^2 + (z_3 - z_1)R_2^2 + (z_2 - z_3) R_1^2 + (z_1 - z_2)(z_3 - z_1)(z_2 - z_3) =0.
\end{equation} Next define the constants $n_i, f_i$:
\begin{align}
n_1 &= -\frac{2 \xi -1}{(1- \xi)(1 - 2\xi^2)^2(1 - 2\xi + 2 \xi^2)}, \qquad n_2 = -\frac{1 - 2\xi + 2\xi^2}{(1 - \xi)(2\xi -1)(1 - 2\xi^2)^2} \\
n_3 & = -\frac{4(1 - \xi)\xi^2}{(2\xi -1)(2\xi^2-1)^2(1 - 2\xi + 2\xi^2)} \\
f_1 & =\frac{1}{(1-\xi)(2\xi^2-1)^2(1 - 2\xi + 2\xi^2)} \qquad f_2 = \frac{1}{(1-\xi)(2\xi-1)(2\xi^2-1)^2} \\
f_3& = \frac{1}{\xi(2\xi-1)(2\xi^2-1)^2(1 - 2\xi + 2\xi^2)}
\end{align} Note that the $f_i > 0$ in our parameter range $1/2 < \xi < 1/\sqrt{2}$.   The relations \eqref{rho}, \eqref{z} can be inverted to yield
\begin{align}
x &=  \frac{2(n_1R_3 + n_2 R_2 + n_3 R_1) +1}{2(f_1 R_3 + f_2 R_2 + f_3 R_1)} \\
y & = \frac{2(n_1 R_3 + n_2 R_2 + n_3 R_1)-1}{2(f_1 R_3 + f_2 R_2 + f_3 R_1)}
\end{align} This demonstrates that $x,y$ are smooth functions on $M$ (note in particular since $f_i > 0$, the denominators in the above expressions cannot vanish). 

\section{A homeomorphism from $M$ to $ \C P^2 \setminus S^1$}\label{A homeomorphism from M}

We can convert between the Chen-Teo coordinates on $M \cong \C P^2 \setminus S^1$, to the action-angle coordinates on $\C P^2$ familiar in symplectic geometry, and to the homogeneous coordinates of complex geometry.  The idea is that since the orbit space $M/T$ is a rectangle in the $x$-$y$-coordinates,  and is a triangle in action coordinates, we need only find a change of coordinates to transform that rectangle to a triangle.

Consider the linear change of variable  $ \tilde{x} :=  \frac{x_2-y}{x_2 -x_1}$ and $\tilde{y} := \frac{x-x_2}{x_3 -x_2}$ so that $\tilde{x}$ and $\tilde{y}$ range between $0$ and $1$ with ``infinity" lying at the origin.  In coordinates 
\begin{align}
u &:= \frac{(1-\tilde{x}^2)(1+\tilde{y}^2)}{2}  & v  & := \frac{ (1+ \tilde{x}^2)(1 -\tilde{y}^2)}{2}
\end{align}
the orbit space is the triangular region with vertices $(u,v) = (0,0), (1,0), (0,1)$  and infinity located at $(\frac{1}{2}, \frac{1}{2})$.  Then we produce a homeomorphism from $M$ to $\C P^2 \setminus S^1$ in homogeneous coordinates sending 
$$(u,v, \phi^2, \phi^3) \mapsto [ \sqrt{1-u-v}: e^{i\phi^2}  \sqrt{u} : e^{i\phi^3} \sqrt{v}].$$
We note however that this transformation is continuous, but not smooth along the bolts.

\section{$L^2$ energy for $T^2$-invariant (anti-)self-dual harmonic forms}\label{LThf}

In this appendix we present an alternative method for computing the $L^2$-energy for the (anti-)self-dual harmonic forms constructed above. The approach applies generally to any setting in which such harmonic forms are invariant under the action of a torus isometry acting on an Ricci-flat,  AF Riemannian manifold $(M,g)$. The two-dimensional subspaces of the tangent space orthogonal to the generators of this torus action are assumed to be integrable (a sufficient condition for this to hold is that $\text{Ric}(g) =0$).   We will derive a general formula for the energy and then consider in detail the specific case of the Chen-Teo gravitational instanton. 
\subsection{General rod strucutre}
Recall that for (anti-)self-dual forms invariant under a single local isometry, the computations of \S \ref{harmonic} showed that
\begin{equation}
\omega \wedge \star \omega = \frac{2}{\lambda} |\td \alpha|^2_g \td \text{Vol}(g) \; . 
\end{equation} We can rewrite the PDEs \eqref{scalarSD}, \eqref{scalarASD} in the form of a conserved current: 
\begin{equation}\label{conserved}
\td \star_h \left[ \frac{\td \alpha}{\lambda} \mp \frac{\alpha \td \zeta}{\lambda^2} \right] =0
\end{equation} where we used the fact $\td \star_h (\lambda^{-2} \td \zeta)=0$ which follows from the reduced system \eqref{Ricciflat}.  Now suppose that $\omega$ is invariant under a second  $U(1)$ isometry generated in a coordinate chart by $\partial_\phi$.  We may write the orbit space metric $h$ in the form
\begin{equation}
h = g_2 + \rho^2 \td \phi^2,  
\end{equation} where $g_2$ is the metric induced on the integrable surfaces orthogonal to the $U(1) \times U(1)$ action.  Let $\beta$ be any 1-form satisfying $i_{\partial_\phi} \beta =0$.  We then have $\star_h \beta = \td \phi \wedge (\rho \star_2 \beta)$ where $\star_2$ is the Hodge dual operation with respect to $g_2$.  Substituting $\beta = \td \alpha$,  \eqref{conserved}  is equivalent to
\begin{equation}\label{current}
\td  \star_2 \rho \left[ \frac{\td \alpha}{\lambda}  \mp \frac{\alpha \td \zeta}{\lambda^2}\right] =0
\end{equation}  The $L^2$ energy can be expressed as an integral over $B$:
\begin{equation}
\frac{1}{2} ||\omega||^2_{L^2}=\frac{1}{8\pi^2} \int_M \frac{2}{\lambda} |\td \alpha|^2_g \td \text{Vol}(g) =   \int_B\frac{ |\td \alpha|^2_2}{\lambda} \rho  \td \text{Vol}(g_2) = -  \int_B  \lambda^{-1}\rho\,  \td \alpha \wedge \star_2 \td \alpha
\end{equation} where $B = M \setminus T^2$  and $\rho^2$ is the determinant of $g$ restricted to Killing vectors fields $\ell_1, \ell_2$ generating periodic flow with associated $2\pi-$periodic coordinates $(\phi^1, \phi^2)$.   The $-$ sign arises from our convention for the Hodge dual, i.e. $\star_2 \td x^a = \epsilon_{b}^{~a} \td x^b$ where $\epsilon_{ab}$ represents the volume form associated to $g_2$. Then
\begin{equation}\begin{aligned}
\frac{1}{2}  ||\omega||^2_{L^2} &= -  \int_B \td \left[ \frac{\rho}{\lambda} \alpha \star_2 \td \alpha \right] - \alpha \td \left[ \frac{\rho}{\lambda} \star_2 \td \alpha \right]  = -   \int_B \td \left[ \frac{\rho}{\lambda} \alpha \star_2 \td \alpha \right]  \mp \alpha \td \left[ \frac{\alpha \rho}{\lambda^2} \star_2 \td \zeta \right] \\
& = - \int_B  \td \left[ \frac{\rho}{\lambda} \alpha \star_2 \td \alpha  \mp \frac{ \alpha^2 \rho}{2 \lambda^2} \star_2 \td \zeta \right]
\end{aligned}
\end{equation} where in the first and second lines we used \eqref{current} and the condition $\td (\rho \lambda^{-2} \star_2 \td \zeta)=0$.  Then Stokes' theorem applied to the two-dimensional manifold $B$ with boundary $\partial B$ and asymptotic end $\partial B_\infty$ gives
\begin{equation}\label{bdyint}
\frac{1}{2} ||\omega||^2_{L^2} = -\int_{\partial B \cup \partial B_\infty} \left[ \frac{\rho}{\lambda} \alpha \star_2 \td \alpha \mp \frac{\alpha^2 \rho}{2 \lambda^2} \star_2 \td \zeta \right].
\end{equation}  As discussed in \S 4, $B$ can be given global coordinates $(\rho,z)$ with $\rho > 0, z \in \mathbb{R}$. Note that $\partial B$, upon which the torus action degenerates, corresponds to  $\rho =0$. Hence there should be no contributions  to $E$ from the interior of rods. However at the corner points, $\lambda$ has simple zeroes.  To evaluate the boundary integral, we take a semi-circular contour around each corner point and then shrink these contours to zero. Finally at the asymptotic end, note that $\rho \sim r \sin \theta$ whereas $\td\alpha = O(1/r)$ so these contributions will converge. 

In a neighbourhood of a given corner point $z = z_i$ we may introduce an adapted spherical coordinate system $(R, \theta, \phi_R, \phi_L)$ centred at  $z_i$~\cite{Hollands:2008fm}.  In this chart,  full metric can be expressed as
\begin{equation}
g  = \td R^2 + R^2 \left[ \td \theta^2 + \sin^2 \theta \td \phi_R^2 + \cos^2 \theta \td \phi_L^2 \right] + O(R^4)
\end{equation} where $R > 0$, $\theta \in [0,\pi/2]$,  $R  = 0$ corresponds to the centre and $\theta =0$ corresponds to the symmetry axis of $\partial_{\phi_R}$ and similarly for $\theta = \pi/2$ and $\partial_{\phi_L}$.   The Killing fields $\partial_{\phi_R}, \partial_{\phi_L}$ vanish to the right ($z> z_i$) and left $z < z_i$ respectively of the corner point. We may express the Killing vector field $K$ in terms of this basis:
\begin{equation}
K = c_1 \frac{\partial}{\partial \phi_R} + c_2 \frac{\partial}{\partial \phi_L}
\end{equation} for some constants $c_1, c_2$. 
Now as $R \to 0$,
\begin{equation}
\lambda = |K|^2 = R^2 \left[ c_1^2 \sin^2\theta + c_2^2 \cos^2\theta + O(R^2) \right], \qquad \rho = R^2 \sin \theta \cos \theta + O(R^2)
\end{equation} and we note that the term in the square brackets is non-vanishing.  Consider the first term in \eqref{bdyint}.  We know $\alpha$ is a smooth function, so we can write
\begin{equation}
\td \alpha = \partial_R \alpha \td R + \partial_\theta \alpha \td \theta, \qquad \star_2 \td \alpha = -R \partial_R \alpha \td \theta + \frac{1}{R} \partial_\theta \alpha \td R
\end{equation}  Now consider an semi-circular integration contour about the centre with  $R = \epsilon$ fixed and $\theta \in (0,\pi/2)$.  Note that 
\begin{equation}
R \partial_R \alpha = x^i \partial_{x^i} \alpha, 
\end{equation} where $x^i$ are Cartesian coordinates and $R^2 = \sum^4_{i=1} (x^i)^2$.  This is obviously smooth and the integral is $O(\epsilon)$ and hence vanishes as  the semicircle shrinks to zero.  Next let us examine the term involving the twist potential. In this coordinate chart, we find that
\begin{equation}
\td \zeta = \star_g (K \wedge \td K) = 2 c_1 c_2 R \td R
\end{equation} where we chose the orientation so that $(R, \theta, \phi_R, \phi_L)$ is positively oriented.  It follows that $\star_2 \td \zeta = -2 c_1 c_2 R^2 \td \theta$. Therefore we have, as $R \to 0$
\begin{equation}
\frac{\alpha^2 \rho}{2 \lambda^2} \star_2 \td \zeta = -\frac{\alpha^2(0)c_1 c_2 \sin\theta \cos\theta}{(c_1^2 \sin^2\theta + c_2^2 \cos^2\theta)^2} \td \theta + O(R)
\end{equation} which we see is $O(1)$ as we integrate on a semicircle with $R = \epsilon \to 0$.  Using the fact that
\begin{equation}
\int_0^{\frac{\pi}{2}} \frac{c_1 c_2 \sin\theta \cos\theta}{(c_1^2 \sin^2\theta + c_2^2 \cos^2\theta)^2} = \frac{1}{2 c_1 c_2}
\end{equation} we find, taking into the orientation of $\partial B$,  that
\begin{equation}
 -  \int_{\partial B} \left[ \frac{\rho}{\lambda} \alpha \star_2 \td \alpha \mp \frac{\alpha^2 \rho}{2 \lambda^2} \star_2 \td \zeta \right] = \pm \frac{1}{2}   \sum_{z_i}  \frac{\alpha(z_i)^2 }{c^i_R c^i_L} . 
 \end{equation} where $z_i$ refer to the corner points with associated frequencies $c^i_R, c^i_L$ and the orientation is chosen so that one integrates in the order of increasing $z$. 

 Finally, we consider the contributions to \eqref{bdyint} arising from the asymptotic end.  In the asymptotic region, the metric  of an AF gravitational instanton approaches
\begin{equation}\label{genAF}
g = A^2( \td \phi^2)^2 \td r^2 + r^2( \td \vartheta^2 +  \sin^2\vartheta \left[ \td \phi^1 + B \td \phi^2\right]^2)
\end{equation} for certain constants $A >0, B$ (note that, as in the specific Chen-Teo case discussed at length above, we have chosen $\ell_1$ to degenerate on the asymptotic axes of symmetry) and $\vartheta \in (0,\pi)$.   In particular we read off, as $r \to \infty$ that $\rho \to A r \sin \vartheta$.  For convenience we will normalize the Killing field $K$ so that $\lambda \to 1$ as $r \to \infty$.  The integral over the asymptotic boundary is easily obtained. 
We summarize these computations with the following proposition. 
\begin{prop} Suppose $(M,g)$ is an AF gravitational instanton admitting a torus action as isometries such that in the asymptotic region, $g$ approaches the model metric \eqref{genAF}.  Normalize the associated Killing vector field $K$ such that that $|K| \to 1$ in the asymptotic region and let $\zeta$ be the associated twist potential.  Let $\omega$ be an (anti-)self-dual harmonic 2-form generated by the smooth function $\alpha$ according to \eqref{scalarSD}, \eqref{scalarASD}.  Then its $L^2$ energy can be expressed as 
\begin{equation}\begin{aligned}\label{energy}
\frac{1}{2}||\omega||^2_{L^2} =\frac{1}{8\pi^2} \int_M \omega \wedge \star \omega &= 2 A \lim_{r \to \infty} \int_0^\pi r^2 \sin \vartheta \left(2 \alpha \partial_r \alpha \mp \alpha^2 \partial_r \zeta\right) \mathrm{d}  \vartheta
 \pm \frac{1}{2}   \sum_{z_i}  \frac{\alpha(z_i)^2 }{c^i_1 c^i_2} . 
\end{aligned}
\end{equation} where the upper and lower signs refer to self-dual and anti-self-dual harmonic forms respectively and $z_i$ refer to corner points with associated frequencies $c_1^i, c^i_2$ with respect to the Killing vector field $K$. 
\end{prop}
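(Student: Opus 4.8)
The plan is to recognize that the proposition packages the boundary computation carried out earlier in this section, so the proof consists of assembling three ingredients in order. First I would recall from \eqref{L2cond} that $\omega \wedge \star\omega = \tfrac{2}{\lambda}|\td\alpha|_g^2\,\td\mathrm{Vol}(g)$ and perform the Killing reduction: writing the orbit-space metric as $h = g_2 + \rho^2\,\td\phi^2$ and using that $\star_h$ factors through the two-dimensional Hodge star $\star_2$ on the fibre carrying the $2\pi$-periodic coordinate, the integral over $M$ collapses to $\tfrac{1}{2}\|\omega\|_{L^2}^2 = -\int_B \lambda^{-1}\rho\,\td\alpha\wedge\star_2\td\alpha$ over $B = M\setminus T^2$.

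Second, I would recast the scalar equations \eqref{scalarSD} and \eqref{scalarASD} in the conserved-current form \eqref{current}, using the identity $\td(\rho\lambda^{-2}\star_2\td\zeta)=0$ which follows from the reduced system \eqref{Ricciflat}. An integration by parts then turns the bulk integrand into the exact two-form $\td\!\left[\tfrac{\rho}{\lambda}\alpha\star_2\td\alpha \mp \tfrac{\alpha^2\rho}{2\lambda^2}\star_2\td\zeta\right]$, and Stokes' theorem on $B$ reduces the energy to the boundary integral \eqref{bdyint} over $\partial B \cup \partial B_\infty$.

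Third, and this is the substantive step, I would evaluate the boundary terms piece by piece. On the interior of each rod $\rho\to 0$ while $\alpha,\zeta$ remain smooth, so there is no contribution. Near a corner $z_i$ I would pass to the adapted spherical chart $(R,\theta,\phi_R,\phi_L)$ in which $g = \td R^2 + R^2(\td\theta^2 + \sin^2\theta\,\td\phi_R^2 + \cos^2\theta\,\td\phi_L^2) + O(R^4)$, expand $\lambda = R^2(c_1^2\sin^2\theta + c_2^2\cos^2\theta) + O(R^4)$, $\rho = R^2\sin\theta\cos\theta + O(R^4)$ and $\td\zeta = 2c_1 c_2\,R\,\td R + \cdots$, evaluate both integrands on the contour $R=\epsilon$, observe that the $\star_2\td\alpha$ term is $O(\epsilon)$ because $R\partial_R\alpha = x^i\partial_{x^i}\alpha$ is smooth and vanishes at the centre while the twist term has a finite limit, and compute the elementary angular integral $\int_0^{\pi/2}\frac{c_1 c_2\sin\theta\cos\theta}{(c_1^2\sin^2\theta + c_2^2\cos^2\theta)^2}\,\td\theta = \tfrac{1}{2c_1 c_2}$, giving the corner contribution $\pm\tfrac{1}{2}\sum_{z_i}\tfrac{\alpha(z_i)^2}{c_1^i c_2^i}$ once the orientation of $\partial B$ is fixed. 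At the asymptotic end I would insert the model metric \eqref{genAF}, read off $\rho\to A r\sin\vartheta$, and with $K$ normalized so that $\lambda\to 1$ the boundary integrand becomes $2Ar^2\sin\vartheta\,(2\alpha\partial_r\alpha \mp \alpha^2\partial_r\zeta)$ in the limit $r\to\infty$. Adding the two contributions yields \eqref{energy}.

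The main obstacle is the corner analysis: one must justify carefully that shrinking the semicircular contours produces no spurious contribution from the rod interiors, and that in the limit the $\star_2\td\alpha$ term genuinely drops out while the twist term survives with a finite residue — this rests on the smoothness of $\alpha$ (cf.\ Appendix \ref{Smoothness of the functions (x,y)}) together with the simple vanishing of $\lambda$ at the corners. The asymptotic term by comparison requires only that $\td\alpha = O(r^{-1})$ so that the $r\to\infty$ limit exists, which is guaranteed by the AF hypothesis.
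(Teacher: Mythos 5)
Your proposal is correct and follows essentially the same route as the paper: reduction of the $L^2$ norm to the two-dimensional orbit space $B$, rewriting of \eqref{scalarSD}--\eqref{scalarASD} as the conserved current \eqref{current}, integration by parts and Stokes' theorem yielding \eqref{bdyint}, and then the same case-by-case evaluation of the boundary terms (vanishing on rod interiors, the residue $\pm\tfrac{1}{2}\alpha(z_i)^2/(c_1^i c_2^i)$ at each corner via the angular integral in the adapted chart, and the asymptotic contribution read off from \eqref{genAF}). The paper presents these computations as the derivation preceding the proposition and then states the result as a summary, so there is nothing to add.
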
 \noindent We note that for the energy to converge, it is sufficient to require $\alpha = O(r^{-2})$ as $r \to \infty$. 

\subsection{Chen-Teo instanton}
We now turn to the computation of the $L^2$ energies associated to the self-dual harmonic form $\omega_+$ and the anti-self-dual harmonic forms $\omega_-, \omega_2$ in the Chen-Teo gravitational instanton.  We choose $K = \partial_\tau$ (note that this does not satisfy the normalization condition $|K|^2 \to 1$ in the asymptotic region, although it is straightforward to modify \eqref{energy}).  There are 3 corner points $z_i$ to consider.  The vector fields $\partial/ \partial \phi_R, \partial / \partial \phi_L$ associated to each $z_i$  are fixed by the requirement that $\td \phi_R \wedge \td \phi_L$ has positive orientation with respect to $\td \phi^1 \wedge \td \phi^2$.  We find that
\begin{equation}
\begin{aligned}
&z_3: \quad \frac{\partial}{\partial \phi_R} = \ell_1, \qquad \frac{\partial}{\partial \phi_L} = \ell_2 \\
&z_2: \quad \frac{\partial}{\partial \phi_R} = \ell_2, \qquad \frac{\partial}{\partial \phi_L} = -\ell_1 + \ell_2 \\
&z_1: \quad \frac{\partial}{\partial \phi_R} = \ell_1-\ell_2, \qquad \frac{\partial}{\partial \phi_L} = \ell_1
\end{aligned} 
\end{equation} where $(\ell_1, \ell_2)$ are defined by \eqref{ellinspi}.  We may then calculate the associated `frequencies' 
\begin{equation}
\begin{aligned}
c^3_R &= -\frac{(1 - \xi)^2}{2\sqrt{\kappa} \xi^2}, \qquad c^3_L = \frac{(1 - 2\xi + 2\xi^2)^2}{8\sqrt{\kappa} \xi^4} \\
c^2_R & = \frac{(1 - 2\xi)^2}{8\sqrt{\kappa} \xi^4}, \qquad c^2_L = -c^3_R \\
c^1_R & = -c^3_L, \qquad c^1_L  = c^2_R
\end{aligned}
\end{equation} We record the values
\begin{equation} \begin{aligned}
\alpha_+ (z_1)&  = -\alpha_-(z_1) =-\frac{1 - 3\xi + 2\xi^2 - 2\xi^3}{\xi(1-4\xi^4)} , \qquad  \alpha_+(z_2) = -\alpha_-(z_2)  =  -\frac{1 - 2\xi - 4\xi^3 + 4\xi^4}{2\xi^2(1 - 4\xi^4)}, \\
 \alpha_+(z_3) &=-\alpha_-(z_3)  =  \frac{1 - 2\xi + 6\xi^2 - 4\xi^3}{2\xi^2(1 - 4\xi^4)}\\
\alpha_2(z_1) & = -\frac{1}{4\xi^4(1-\xi)(1-4\xi^4)}, \qquad \alpha_2(z_2) = -\frac{1}{2\xi^3(1-4\xi^4)(1-2\xi + 2\xi^2)}, \\  \alpha_2(z_3) &= \frac{1}{2\xi^3(1 - 2\xi)(1-4\xi^4)}
\end{aligned}
\end{equation}  Next, we consider the contributions from the asymptotically flat end.  Using the appropriate values of $A, B$  one finds
\begin{equation}
\rho  =  \frac{8 \sqrt{\kappa} \xi^4}{\sqrt{1 - 4\xi^4} (1 - 2\xi + 2 \xi^2)^2} \cdot r \sin\theta + O(1). 
\end{equation} Furthermore it is straightforward to verify that as $r \to \infty$,  
\begin{equation}
\partial_r \zeta = -\frac{4 \kappa \xi^2 ( 1 - \xi + 2 \xi^2) \cos\theta}{(1 - \xi)(1 - 2\xi)(1 - 4\xi^4)} \frac{1}{r^3} + O(r^{-4})
\end{equation} and hence $r^2 \partial_r \zeta = O(1/r)$ and does not contribute to the integral over the asymptotic boundary as $r \to \infty$.  Similarly, 
\begin{equation}
\begin{aligned}
\alpha_+ \partial_r \alpha_+ &= \frac{2\sqrt{\kappa} \sqrt{1 - 4 \xi^4}}{(1 - 2 \xi^2)^3(1 + 2 \xi^2) r^2} + O(r^{-3}) \\
\alpha_- \partial_r \alpha_- & = O(1/r^3), \qquad \alpha_2 \partial_r \alpha_2 = O(1/r^3)
\end{aligned}
\end{equation} Recall that $\alpha_- , \alpha_2$ both vanish in the asymptotic region as $O(1/r)$.  Applying the formula \eqref{energy}  we find
\begin{equation}
\frac{1}{2}||\omega_+||^2_{L^2} = \frac{1}{2}||\omega_-||^2_{L^2}= \frac{16 \kappa \xi^4}{(1 - 2\xi^2)^2(1 - 2\xi + 2\xi^2)^2}
\end{equation} and
\begin{equation}
\frac{1}{2}||\omega_2||^2_{L^2} = \frac{ \kappa}{(1- \xi)^2(1 - 2\xi)^2(1 - 4\xi^4)^2(1 - 2\xi + 2 \xi^2)^2}
\end{equation} Finally, let us consider the computation of
\begin{equation}
q_{ij} :=\frac{1}{8\pi^2}\int_M \omega_I \wedge \star \omega_J
\end{equation} where $I=1,2$ label the anti-self-dual forms. For simplicity we take as our basis $\omega_-, \omega_2$ generated by $\alpha_-, \alpha_2$ respectively.  We have already computed the diagonal components  $q_{11}, q_{22}$. An easy way to compute $q_{12}$ is to use linearity of the PDE satisfied by $\alpha$.  Namely,  if $\alpha$ and $\beta$ are functions which give rise to anti-self-dual  harmonic forms $\omega_\alpha, \omega_\beta$, then $\alpha + \beta$ will give rise to another anti-self-dual harmonic form $\omega_{\alpha + \beta}$.  We have the `parallelogram identity' 
\begin{equation}
\frac{1}{8\pi^2}\int_M \omega_\alpha \wedge \star \omega_\beta = \frac{1}{2} \left [ ||\omega_{\alpha + \beta}||^2_{L^2} - ||\omega_\alpha||^2_{L^2} - ||\omega_\beta||^2_{L^2}\right]. 
\end{equation} We find
\begin{equation}
\frac{1}{8\pi^2}\int_M \omega_- \wedge \star \omega_2 = -\frac{16 \kappa \xi^3}{(1 -\xi)(1 - 2\xi)(1 - 4\xi^4)^2(1 - 2\xi + 2\xi^2)^2}
\end{equation} This completes the calculation of $q_{ij}$. 
\begin{remark}
Consider the harmonic two-form $\td K$.  We may express its energy as 
\begin{equation}\label{L2K}
\frac{1}{2}||\td K||^2_{L^2} = \frac{1}{8\pi^2}\int_M \td K \wedge \star \td K = \frac{1}{8} (||\omega_+||^2_{L^2} + ||\omega_-||^2_{L^2}) = \frac{8 \kappa\xi^4}{(1 - 2\xi + 2\xi^2)^2 (1 - 2\xi^2)^2}
\end{equation}   Alternatively,  we may compute this integral by using 
\begin{equation}
\int_M \td K \wedge \star \td K = \int_M \td \left( K \wedge \star \td K \right) = \int_{\partial M} K \wedge \star \td K
\end{equation} and evaluating the integral over the asymptotic boundary of $M$.  It is an exercise to verify that this yields \eqref{L2K}.
\end{remark}

\end{document}